\DeclareMathOperator{\lspan}{span}                          
\DeclareMathOperator{\conv}{conv}                           
\DeclareMathOperator{\supp}{supp}                           
\DeclareMathOperator{\rad}{rad}                             
\DeclareMathOperator{\Lip}{Lip}                             
\newcommand{\NN}{\mathbb{N}}             
\newcommand{\RR}{\mathbb{R}}             
\newcommand{\abs}[1]{\left|{#1}\right|}                     
\newcommand{\pare}[1]{\left({#1}\right)}                    
\newcommand{\set}[1]{\left\{{#1}\right\}}                   
\newcommand{\norm}[1]{\left\|{#1}\right\|}                  
\newcommand{\dual}[1]{{#1}^\ast}                            
\newcommand{\ddual}[1]{{#1}^{\ast\ast}}                     
\newcommand{\ball}[1]{B_{{#1}}}                             
\newcommand{\duality}[1]{\left<{#1}\right>}                 
\newcommand{\cl}[1]{\overline{#1}}                          
\newcommand{\wscl}[1]{\overline{#1}^{\dual{w}}}             
\newcommand{\weaks}{\textit{w}$^\ast$}                      
\newcommand{\wsconv}{\stackrel{\dual{w}}{\longrightarrow}}  
\newcommand{\lipfree}[1]{\mathcal{F}({#1})}                 
\newcommand{\Free}{\mathcal{F}}                             
\newcommand{\lipfreesub}[2]{\mathcal{F}_{#1}({#2})}         
\newcommand{\lipnorm}[1]{\norm{#1}_L}                       
\newcommand{\embd}{\delta}                                  
\newcommand{\idealsub}[2]{\mathcal{I}_{#1}({#2})}           
\newcommand{\mol}[1]{m_{#1}}                                
\newcommand{\restricted}{\mathord{\upharpoonright}}
\theoremstyle{plain}
\newtheorem{theorem}{Theorem}[section]
\newtheorem{lemma}[theorem]{Lemma}
\newtheorem{proposition}[theorem]{Proposition}
\theoremstyle{definition}
\newtheorem*{definition*}{Definition}
\newtheorem{definition}[theorem]{Definition}
\theoremstyle{remark}
\newtheorem{remark}[theorem]{Remark}
\begin{document}

\title[Supports in $\lipfree{M}$ and applications]{Supports in Lipschitz-free spaces and applications to extremal structure}

\author[R. J. Aliaga]{Ram\'on J. Aliaga}
\address[R. J. Aliaga]{Instituto Universitario de Matem\'atica Pura y Aplicada, Universitat Polit\`ecnica de Val\`encia, Camino de Vera S/N, 46022 Valencia, Spain}
\email{raalva@upvnet.upv.es}

\author[E. Perneck\'a]{Eva Perneck\'a}
\address[E. Perneck\'a]{Faculty of Information Technology, Czech Technical University in Prague, Th\'akurova 9, 160 00, Prague 6, Czech Republic}
\email{perneeva@fit.cvut.cz}

\author[C. Petitjean]{Colin Petitjean}
\address[C. Petitjean]{LAMA, Univ Gustave Eiffel, UPEM, Univ Paris Est Creteil, CNRS, F-77447, Marne-la-Vall\'ee, France}
\email{colin.petitjean@u-pem.fr}

\author[A. Proch\'azka]{Anton\'in Proch\'azka}
\address[A. Proch\'azka]{Laboratoire de Math\'ematiques de Besan\c con,
Universit\'e Bourgogne Franche-Comt\'e,
CNRS UMR-6623,
16, route de Gray,
25030 Besan\c con Cedex, France}
\email{antonin.prochazka@univ-fcomte.fr}

\date{} 


\begin{abstract}
We show that the class of Lipschitz-free spaces over closed subsets of any complete metric space $M$ is closed under arbitrary intersections, improving upon the previously known finite-diameter case. This allows us to formulate a general and natural definition of supports for elements in a Lipschitz-free space $\mathcal F(M)$. We then use this concept to study the extremal structure of $\mathcal F(M)$. We prove in particular that $(\delta(x) - \delta(y))/d(x,y)$ is an exposed point of the unit ball of $\mathcal F(M)$ whenever the metric segment $[x,y]$ is trivial, and that any extreme point which can be expressed as a finitely supported perturbation of a positive element must be finitely supported itself. We also characterise the extreme points of the positive unit ball: they are precisely the normalized evaluation functionals on points of $M$.
\end{abstract}


\subjclass[2010]{Primary 46B20; Secondary 46B04, 54E50}

\keywords{exposed point, extreme point, Lipschitz-free space, Lipschitz function, support}

\maketitle


\section{Introduction}

The canonical preduals of Banach spaces of real-valued Lipschitz functions on metric spaces are known by a host of different names (Arens-Eells spaces \cite{Weaver2}, transportation cost spaces \cite{OsOs_2019}). In the Banach space geometer community, they are commonly referred to as Lipschitz-free spaces after Godefroy and Kalton coined the term in \cite{GoKa_2003}. 
Their most important application to non-linear geometry is likely their universal extension property: any vector-valued Lipschitz map defined on a given metric space extends uniquely as a bounded linear operator defined on the corresponding Lipschitz-free space (This result appears in disguise in the works of Kadets \cite[Corollary 2]{Kadets85} and Pestov~\cite[Theorem 1]{Pestov86} and was first explicitly stated by Weaver~\cite[Theorem 3.6]{Weaver2}, each with a different construction of the free space. For a short proof see \cite[Section 2]{CuDoWo_2016}.)
This allows us to turn a complicated (Lipschitz) mapping into a simple (linear) one at the expense of turning the metric domain into its more complex Lipschitz-free counterpart.
But this effort is only worthwhile if our knowledge of the structure of the Lipschitz-free spaces is deep enough. And indeed, although their definition looks simple, the current understanding of their structure is still quite limited. To name just a couple of open questions, it is not known whether the Lipschitz-free spaces over $\RR^n$ are isomorphic for different values of $n$ \cite{CuKaKa_2017}, or whether all Lipschitz-free spaces over subsets of $\RR^n$ have a Schauder basis \cite{HaPe_2014}; see Chapter 5 of \cite{GMZ} for other related open problems.

It is easy to deduce from the universal property that if $M$ is a metric space and $N$ is any subset containing the base point, then the Lipschitz-free space over $N$ is canonically identified with a subspace of the Lipschitz-free space over $M$. In a previous paper \cite{AlPe_2018} the first two authors showed that, when $M$ has finite diameter, the intersection of any family of such free spaces over closed subsets $K_i$ of $M$ is just the free space over the intersection of the sets $K_i$. This result seems quite intuitive, but its proof is rather nontrivial and depends on somewhat deep results by Weaver on the structure of algebraic ideals in the space of Lipschitz functions on $M$ (see Chapter 7 of \cite{Weaver2}).

In this note, we extend the result from \cite{AlPe_2018} to any complete metric space, thus showing that this natural property holds in general. The proof is not simpler; on the contrary, we reduce the general case to the known bounded case. To do this, we introduce a weighting operation on elements of the Lipschitz-free space using Lipschitz functions of bounded support.

In \cite{AlPe_2018}, a natural definition of support of an arbitrary element of a Lipschitz-free space was also proposed, and it was shown that such supports existed on any space that satisfied the intersection theorem. Thus, our first main result implies that the definition of support is valid in general. We develop its basic properties and obtain some equivalent characterizations of the concept.

Several applications of supports and weighted elements to the study of the extremal structure of Lipschitz-free spaces are also provided, adding to a number of other recent contributions \cite{AG,AlPe_2018, AlPePr_2019, Duality, Daugavet}. We start by showing that any elementary molecule $(\delta(x) - \delta(y))/d(x,y)$ such that the metric segment $[x,y]$ is trivial must be an exposed point of the unit ball of the Lipschitz-free space. This improves the main result in \cite{AlPe_2018}, which states that such an element must be an extreme point, and moreover provides a much shorter proof. We also prove that the extreme points of the positive unit ball are exactly the normalized evaluation functionals, i.e. $0$ and the elements $\delta(x)/d(x,0)$, and that they are all preserved. Finally, we prove that any extreme point of the form $\lambda+\mu$, where $\lambda$ is positive and $\mu$ has finite support, must be finitely supported. This provides some progress towards solving the conjecture that all extreme points of the ball of a Lipschitz-free space must be finitely supported.

\subsection{Notation}

Let us begin by introducing the notation that will be used throughout this paper. We will write $\ball{X}$ for the closed unit ball of a Banach space $X$ and $\ball{X}^O$ for its open unit ball. Next $M$ will denote a complete pointed metric space with metric $d$ and base point $0$, $B(p,r)$ will stand for the closed ball of radius $r$ around $p\in M$, and we will use the notation
\begin{align*}
d(p,A) &= \inf\set{d(x,p):x\in A} \\
\rad(A) &= \sup\set{d(x,0):x\in A}
\end{align*}
for $p\in M$ and $A\subset M$. These quantities will be called the \emph{distance} from $p$ to $A$ and the \emph{radius} of $A$, respectively.

Then $\Lip(M)$ will be the space of all real-valued Lipschitz functions on $M$, and $\Lip_0(M)$ will consist of all $f\in\Lip(M)$ such that $f(0)=0$. For $f\in\Lip(M)$ we will denote its Lipschitz constant by $\lipnorm{f}$. Recall that $\lipnorm{f}$ is a norm on $\Lip_0(M)$ that turns it into a dual Banach space. For any $x\in M$, we will use the notation $\embd(x)$ for the evaluation functional $f\mapsto f(x)$. Note that $\embd$ is a (non-linear) isometric embedding of $M$ into $\dual{\Lip_0(M)}$, and in fact $\lipfree{M}=\cl{\lspan}\,\embd(M)$ is the canonical predual of $\Lip_0(M)$, which we will call the \emph{Lipschitz-free space} over $M$. The weak$^\ast$ topology induced by $\lipfree{M}$ on $\Lip_0(M)$ coincides with the topology of pointwise convergence on norm-bounded subsets of $\Lip_0(M)$. In what follows the weak$^\ast$ topology will always be denoted \weaks.

We will say that $f\in\Lip(M)$ is positive if $f\geq 0$, i.e. if $f(x)\geq 0$ for all $x\in M$. Recall that the pointwise order is a partial order in $\Lip_0(M)$, and that a functional $\phi\in\lipfree{M}$ (or $\dual{\Lip_0(M)}$) is positive if $\phi(f)\geq 0$ for any positive $f\in\Lip_0(M)$. In that case, we will write $\phi\geq 0$; more generally, we will write $\phi\geq\psi$ whenever $\phi(f)\geq\psi(f)$ for all positive $f$.

Given a subset $K$ of $M$, we will also consider the subspace $\lipfreesub{M}{K}=\cl{\lspan}\,\embd(K)$ of $\lipfree{M}$ and the subspace $\idealsub{M}{K}$ of $\Lip_0(M)$ defined by
$$
\idealsub{M}{K}=\set{f\in\Lip_0(M):f(x)=0\text{ for all }x\in K}.
$$
In the above, if $M\neq \varnothing$, we adopt the the convention that $\lspan \varnothing=\set{0}$. Thus $\lipfreesub{M}{K}=\lipfreesub{M}{K\cup\set{0}}$
and $\idealsub{M}{K}=\idealsub{M}{K\cup\set{0}}$ for all $K \subset M$, and $\lipfreesub{M}{K}$ can be identified with the Lipschitz-free space $\lipfree{K\cup\set{0}}$. Let us also recall that $\lipfreesub{M}{K}^\perp=\idealsub{M}{K}$ and $\idealsub{M}{K}_\perp=\lipfreesub{M}{K}$. We refer to the monograph \cite{Weaver2} by Weaver for proofs of these and other basic facts and for further reference.

\section{The intersection theorem and supports}

Our first main result is the following:

\begin{theorem}
\label{tm:intersection_property}
Let $M$ be a complete pointed metric space and let $\{K_i:i\in I\}$ be a family of closed subsets of $M$. Then
$$
\bigcap_{i\in I}\lipfreesub{M}{K_i}=\mathcal{F}_{M}\pare{\bigcap_{i\in I}K_i} .
$$
\end{theorem}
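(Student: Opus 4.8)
The inclusion $\supseteq$ is immediate: $\bigcap_{i\in I}K_i\subset K_i$ gives $\lipfreesub{M}{\bigcap_{i\in I}K_i}\subset\lipfreesub{M}{K_i}$ for every $i$. For the reverse inclusion, the plan is to reduce to the finite-diameter case of \cite{AlPe_2018} by ``localizing'' an arbitrary element to a ball around the base point, via a weighting operator built from Lipschitz functions of bounded support. Concretely, given $h\in\Lip(M)$ with $\supp h\subset B(0,R)$, I would first check that $f\mapsto hf$ maps $\Lip_0(M)$ into itself with $\lipnorm{hf}\leq(\norm{h}_\infty+R\lipnorm{h})\lipnorm{f}$: this is where the bounded-support hypothesis is essential, since the naive Leibniz estimate for $hf$ involves the a priori unbounded values of $f$, but on $\supp h$ those are bounded by $R\lipnorm{f}$. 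The same estimate shows that $\sum_j a_j\embd(x_j)\mapsto\sum_j a_jh(x_j)\embd(x_j)$ is well defined and bounded on $\lspan\embd(M)$, hence extends to an operator $W_h$ on $\lipfree{M}$ with $\norm{W_h}\leq\norm{h}_\infty+R\lipnorm{h}$, whose adjoint is the multiplication $f\mapsto hf$ on $\Lip_0(M)$.

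Two properties of $W_h$ drive the argument, both obtained by computing on molecules and passing to closed linear spans. First, for any $K\subset M$, $W_h$ maps $\lipfreesub{M}{K}$ into $\lipfreesub{M}{K\cap\supp h}$, because $W_h\embd(x)=h(x)\embd(x)$ is either zero or a scalar multiple of some $\embd(x)$ with $x\in K\cap\supp h$. Second, if $h\equiv1$ on $B(0,n)$ then $W_h$ fixes every finitely supported element whose support lies in $B(0,n)$. I would then fix, for each $n$, a function $h_n\in\Lip(M)$ with $h_n\equiv1$ on $B(0,n)$, $\supp h_n\subset B(0,2n)$, $0\leq h_n\leq1$ and $\lipnorm{h_n}\leq 1/n$ (obtained by composing $x\mapsto d(x,0)$ with a piecewise affine cutoff), so that $\norm{W_{h_n}}\leq 3$ for every $n$.

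Now fix $\mu\in\bigcap_{i\in I}\lipfreesub{M}{K_i}$, write $K=\bigcap_{i\in I}K_i$, and set $\mu_n=W_{h_n}\mu$. Since finitely supported elements are dense in $\lipfree{M}$, $\sup_n\norm{W_{h_n}}<\infty$, and each $W_{h_n}$ fixes any prescribed finitely supported element once $n$ is large enough, a routine $\varepsilon/3$ argument gives $\mu_n\to\mu$ in norm. By the first property above, $\mu_n\in\lipfreesub{M}{K_i\cap B(0,2n)}$ for every $i$. Writing $N=B(0,2n)$, a complete metric space of finite diameter, and using that $\mathcal F_N(A)=\lipfreesub{M}{A}$ for $A\subset N$ (the closed linear span of $\embd(A)$ is the same whether computed in $\lipfree{M}$ or in its closed subspace $\lipfreesub{M}{N}=\lipfree{N}$), the finite-diameter intersection theorem of \cite{AlPe_2018}, applied inside $N$, yields
\[
\mu_n\in\bigcap_{i\in I}\mathcal F_N(K_i\cap N)=\mathcal F_N\Bigl(\bigcap_{i\in I}(K_i\cap N)\Bigr)=\mathcal F_N(K\cap N)\subset\lipfreesub{M}{K}.
\]
Since $\lipfreesub{M}{K}$ is closed and $\mu_n\to\mu$, we conclude $\mu\in\lipfreesub{M}{K}$, as desired.

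I expect the main obstacle to be the weighting operator itself: isolating the bounded-support condition as exactly what makes $f\mapsto hf$ act boundedly on $\Lip_0(M)$, and bookkeeping the norm estimate so that $\{W_{h_n}\}$ is uniformly bounded. Once $W_h$ and its two properties are in hand, the rest is a soft combination of density, uniform boundedness, and the already-known finite-diameter theorem.
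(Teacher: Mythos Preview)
Your proof is correct. Both the paper and you reduce to the bounded case of \cite{AlPe_2018} via the same weighting operator built from a Lipschitz cutoff, but the reductions run on opposite sides of the duality. The paper works in $\Lip_0(M)$: it shows that $\lspan\{\idealsub{M}{K_i}:i\in I\}$ is \weaks-dense in $\idealsub{M}{K}$ by first replacing a test function $f\in\idealsub{M}{K}$ by one of bounded support (Lemma~\ref{lm:bounded_supp_dense}), then pushing the bounded-case approximation forward through the \weaks-\weaks-continuous multiplication $T_h$, and finally passing to annihilators. You stay in $\lipfree{M}$: you localize $\mu$ itself via $\mu_n=W_{h_n}\mu$, observe that the uniform bound $\norm{W_{h_n}}\leq 3$ together with density of finitely supported elements gives $\mu_n\to\mu$ in norm, and then apply the bounded case directly to each $\mu_n$. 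Your route is somewhat more elementary in that it needs only boundedness of $W_h$ (not \weaks-\weaks\ continuity of its adjoint) and avoids both the density lemma for ideals and the annihilator calculus; the paper's route, on the other hand, delivers as a byproduct the dual statement $\wscl{\lspan}\{\idealsub{M}{K_i}\}=\idealsub{M}{K}$.
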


Our proof will consist of reducing the problem to the case where $M$ is bounded, which was proved in \cite[Theorem 3.3]{AlPe_2018}. In order to do this, we will analyze the role of Lipschitz functions on $M$ with bounded support. Let us start by highlighting the following simple fact:

\begin{lemma}
\label{lm:bounded_supp_dense}
If $M$ is a pointed metric space, then the Lipschitz functions with bounded support are \weaks-dense in $\Lip_0(M)$ and in $\idealsub{M}{K}$ for any $K\subset M$.
\end{lemma}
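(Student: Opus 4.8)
The plan is to prove both assertions at once by an explicit truncation. Given $f\in\Lip_0(M)$, I will produce a sequence $(f_n)$ of Lipschitz functions with bounded support, lying in $\Lip_0(M)$, with $\sup_n\lipnorm{f_n}<\infty$ and $f_n\to f$ pointwise; since the \weaks-topology coincides with the topology of pointwise convergence on norm-bounded subsets of $\Lip_0(M)$, this immediately gives $f_n\to f$ \weaks. If moreover $f\in\idealsub{M}{K}$, then the $f_n$ will again vanish on $K$, so the same sequence witnesses \weaks-density in $\idealsub{M}{K}$.

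Concretely, for $n\in\NN$ set $\rho_n(x)=\max\set{0,\,1-d(x,0)/n}$ and $f_n=\rho_n\cdot f$. Then $0\le\rho_n\le 1$, the function $\rho_n$ is $(1/n)$-Lipschitz (being $\psi\circ d(\cdot,0)$ with $\psi$ $(1/n)$-Lipschitz), and $\supp\rho_n\subseteq B(0,n)$; consequently $f_n$ has bounded support, $f_n(0)=\rho_n(0)f(0)=0$, and $f_n$ vanishes on $K$ whenever $f$ does. For each fixed $x$ we have $\rho_n(x)\to 1$, hence $f_n(x)\to f(x)$.

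It remains to bound $\lipnorm{f_n}$ uniformly in $n$, and this is the only point requiring care, because $f$ need not be globally bounded. The key observation is that $f$ \emph{is} bounded on $\supp\rho_n$: if $d(x,0)\le n$ then $\abs{f(x)}=\abs{f(x)-f(0)}\le n\lipnorm{f}$. Splitting into cases according to whether $\rho_n$ vanishes at the points involved and using $\abs{\rho_n(x)-\rho_n(y)}\le (1/n)\,d(x,y)$ together with this bound, one checks that $\lipnorm{f_n}\le 2\lipnorm{f}$ for every $n$. Thus $(f_n)$ is contained in a norm-bounded subset of $\Lip_0(M)$ and converges pointwise to $f$, which completes the proof. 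I do not expect any serious obstacle here; the argument is routine once the right bump functions $\rho_n$ are chosen.
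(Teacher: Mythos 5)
Your proof is correct, and it follows the same overall strategy as the paper's: cut $f$ off outside a ball around the base point so that the result has bounded support, vanishes wherever $f$ does (hence the $\idealsub{M}{K}$ statement comes for free), has uniformly bounded Lipschitz norm, and converges pointwise to $f$, then invoke the coincidence of the weak$^\ast$ and pointwise topologies on norm-bounded sets. The only difference is the cutoff mechanism: the paper uses a lattice truncation, $f_r=\max\set{\min\set{f,\lipnorm{f}\Lambda_r},-\lipnorm{f}\Lambda_r}$ with $\Lambda_r$ a tent function of $d(\cdot,0)$, which has the minor advantages that $\lipnorm{f_r}\le\lipnorm{f}$ and $f_r=f$ on all of $B(0,r)$, whereas your multiplicative cutoff $f_n=\rho_n f$ gives only pointwise convergence and the constant $2\lipnorm{f}$ --- both are entirely sufficient for the weak$^\ast$ approximation. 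Your sketched norm estimate does close: the case $\rho_n(x)=\rho_n(y)=0$ is trivial, and if, say, $\rho_n(y)>0$ then $d(y,0)\le n$, so $\abs{f(y)}\le n\lipnorm{f}$ and the decomposition $f_n(x)-f_n(y)=\rho_n(x)\bigl(f(x)-f(y)\bigr)+f(y)\bigl(\rho_n(x)-\rho_n(y)\bigr)$ yields $\abs{f_n(x)-f_n(y)}\le 2\lipnorm{f}\,d(x,y)$.
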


\begin{proof}
For $r>0$, let $\Lambda_r\in\Lip(M)$ be the function defined by
$$
\Lambda_r(x)=\begin{cases}
d(x,0) & \text{if } d(x,0)\leq r \\
2r-d(x,0) & \text{if } r\leq d(x,0)\leq 2r \\
0 & \text{if } 2r\leq d(x,0)
\end{cases} .
$$
This function is positive, has bounded support and satisfies $\lipnorm{\Lambda_r}\leq 1$. Moreover, for any $f\in\Lip_0(M)$ we have $\abs{f(x)}\leq\lipnorm{f}\cdot\Lambda_r(x)$ for any $x\in B(0,r)$. Thus, if we denote
$$
f_r(x)=\max\set{ \min\set{ f(x) , \lipnorm{f}\cdot\Lambda_r(x) } , -\lipnorm{f}\cdot\Lambda_r(x)}
$$
for $x\in M$, then $f_r\in\Lip_0(M)$ has bounded support, $\lipnorm{f_r}\leq \lipnorm{f}$, and $f_r(x)=f(x)$ for all $x\in B(0,r)$. It follows that $f_r\wsconv f$ as $r\rightarrow\infty$. Moreover, notice that $f_r(x)=0$ whenever $f(x)=0$, hence if $f\in\idealsub{M}{K}$ then $f_r\in\idealsub{M}{K}$. It follows that the Lipschitz functions with bounded support are \weaks-dense in $\idealsub{M}{K}$. In particular (taking $K=\set{0}$) they are \weaks-dense in $\Lip_0(M)$.
\end{proof}

Next, we show that pointwise multiplication with a Lipschitz function of bounded support always results in a Lipschitz function and, in fact, defines a continuous operator between Lipschitz spaces:

\begin{lemma}
\label{lm:multiplication_operator}
Let $M$ be a pointed metric space and let $h\in\Lip(M)$ have bounded support. Let $K\subset M$ contain the base point and the support of $h$. For $f\in\Lip_0(K)$, let $T_h(f)$ be the function given by
\begin{equation}
\label{eq:T_h}
T_h(f)(x)=\begin{cases}
f(x)h(x) & \text{if } x\in K \\
0 & \text{if } x\notin K
\end{cases} \,.
\end{equation}
Then $T_h$ defines a \weaks-\weaks-continuous linear operator from $\Lip_0(K)$ into $\Lip_0(M)$, and $\norm{T_h}\leq\norm{h}_\infty+\rad(\supp(h))\lipnorm{h}$.
\end{lemma}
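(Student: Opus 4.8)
The plan is to verify the three claims of the lemma in sequence: that $T_h(f)$ lands in $\Lip_0(M)$ with a controlled Lipschitz norm, that $T_h$ is linear, and that it is \weaks-\weaks-continuous. Linearity is immediate from the pointwise formula \eqref{eq:T_h}, so the substance is the norm estimate and the continuity.

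For the norm bound, fix $f\in\Lip_0(K)$ and take two points $x,y\in M$; I want to estimate $\abs{T_h(f)(x)-T_h(f)(y)}$. If both points lie outside $K$ there is nothing to do. If both lie in $K$, write $f(x)h(x)-f(y)h(y)=(f(x)-f(y))h(x)+f(y)(h(x)-h(y))$ and bound the two terms by $\lipnorm{f}d(x,y)\norm{h}_\infty$ and $\abs{f(y)}\lipnorm{h}d(x,y)$ respectively; since $f(0)=0$ and $0\in K$ we have $\abs{f(y)}\leq\lipnorm{f}d(y,0)\leq\lipnorm{f}\rad(\supp h)$ — here one uses that we may as well assume $h(x)\ne h(y)$, so at least one of $x,y$ lies in $\supp(h)$, and if it is $x$ we instead estimate $\abs{f(x)}\leq\lipnorm{f}\rad(\supp h)$ after symmetrizing. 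The remaining case is $x\in K$, $y\notin K$: then $T_h(f)(y)=0$, and since $y\notin\supp(h)$ (as $\supp h\subset K$) we have $h(y)=0$, so $\abs{T_h(f)(x)}=\abs{f(x)}\abs{h(x)-h(y)}\leq\abs{f(x)}\lipnorm{h}d(x,y)$, and again $\abs{f(x)}\leq\lipnorm{f}\rad(\supp h)$ because $x$ must lie in $\supp(h)$ for this term to be nonzero. Assembling the cases gives $\lipnorm{T_h(f)}\leq(\norm{h}_\infty+\rad(\supp h)\lipnorm{h})\lipnorm{f}$, and $T_h(f)(0)=f(0)h(0)=0$ since $0\in K$, so indeed $T_h(f)\in\Lip_0(M)$ with the claimed operator norm bound.

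For \weaks-\weaks-continuity, since $T_h$ is a bounded linear operator between dual Banach spaces, it suffices to check that it is the adjoint of a bounded operator $\lipfree{M}\to\lipfree{K}$, or equivalently — using that the \weaks\ topology agrees with pointwise convergence on bounded sets — that whenever a bounded net $f_\alpha\to f$ pointwise in $\Lip_0(K)$, then $T_h(f_\alpha)\to T_h(f)$ pointwise in $\Lip_0(M)$. But this is clear from the formula: for fixed $x\in M$, $T_h(f_\alpha)(x)$ is either $0$ or $f_\alpha(x)h(x)$, and $f_\alpha(x)\to f(x)$; boundedness of $\{T_h(f_\alpha)\}$ follows from the norm estimate just proved. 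Hence $T_h$ is \weaks-\weaks-continuous.

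The only delicate point — and what I expect to be the main obstacle — is bookkeeping in the norm estimate: one must be careful that whenever the "bad" term $f(x)(h(x)-h(y))$ or $f(y)(h(x)-h(y))$ is nonzero, the relevant point genuinely lies in $\supp(h)$ so that the crude bound $\abs{f(\cdot)}\leq\lipnorm{f}\rad(\supp h)$ applies, and that the mixed case $x\in K,y\notin K$ is correctly reduced to this by noting $h(y)=0$. Everything else is routine.
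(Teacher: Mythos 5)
Your proof is correct and follows essentially the same route as the paper: a case-by-case Lipschitz estimate using the product decomposition with the factor of $f$ kept at a point of $\supp(h)$ (the paper splits cases by membership in $\supp(h)$ rather than $K$, which avoids your symmetrization step but is otherwise identical), followed by reducing \weaks-\weaks-continuity to pointwise convergence of bounded nets. Your appeal to the adjoint characterization of \weaks-\weaks-continuous operators on bounded sets is the same standard fact the paper invokes via the Banach--Dieudonn\'e theorem.
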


\begin{proof}
Let $S=\supp(h)$. First, we show that $T_h(f)\in \Lip_0(M)$ for any $f\in\Lip_0(K)$. Clearly $T_h(f)(0)=0$. If $x,y\in S$, then 
\begin{align*}
\frac{\abs{T_h(f)(x)-T_h(f)(y)}}{d(x,y)} &= \frac{\abs{f(x)h(x)-f(y)h(y)}}{d(x,y)} \\
&\leq \frac{\abs{f(x)h(x)-f(x)h(y)}}{d(x,y)}+\frac{\abs{f(x)h(y)-f(y)h(y)}}{d(x,y)}\\
&\leq \sup_S\abs{f}\cdot\lipnorm{h}+\norm{h}_\infty\cdot\lipnorm{f} \\
&\leq (\rad(S)\lipnorm{h}+\norm{h}_\infty)\lipnorm{f} ,
\end{align*}
and if $x\in S$ and $y\in M\setminus S$ then
\begin{align*}
\frac{\abs{T_h(f)(x)-T_h(f)(y)}}{d(x,y)} &= \frac{\abs{f(x)h(x)-f(x)h(y)}}{d(x,y)} \\
&\leq \sup_S\abs{f}\cdot\lipnorm{h}\leq\rad(S)\lipnorm{h}\lipnorm{f} .
\end{align*}
Therefore the function $T_h(f)$ is Lipschitz and
$$
\lipnorm{T_h(f)}\leq \pare{\rad(S)\lipnorm{h}+\norm{h}_\infty}\cdot\lipnorm{f} .
$$
Hence, $T_h$ is a well defined and bounded operator from $\Lip_0(K)$ into $\Lip_0(M)$. Linearity is obvious. 

Finally, we prove that $T_h$ is \weaks-\weaks-continuous. By the Banach-Dieudonn\'e theorem, it suffices to show that it is \weaks-\weaks-continuous on bounded subsets of $\Lip_0(K)$. Since \weaks-convergence agrees with pointwise convergence on bounded subsets of Lipschitz spaces, it is enough to verify that $T_h(f_\gamma)\rightarrow T_h(f)$ pointwise whenever $f_{\gamma}\rightarrow f$ pointwise in $\ball{\Lip_0(K)}$, which is immediate from the definition of $T_h$.
\end{proof}

Let us make a few observations. Trivially $T_h(f)(x)=0$ whenever $x\in K$ is such that $f(x)=0$, hence $T_h$ maps $\idealsub{K}{L}$ into $\idealsub{M}{L}$ for any $L\subset K$. Moreover the function $T_h(f)$ does not depend on the choice of $K$, as long as it contains the support of $h$. Thus the requirement that $0\in K$ is not really a restriction, as one may always use the set $K\cup\set{0}$ instead.

Since $T_h$ is \weaks-\weaks-continuous, there is an associated bounded linear operator $W_h\colon\lipfree{M}\rightarrow\lipfree{K}$ such that $\dual{W_h}=T_h$. Thus we get the following consequence, which restricts to \cite[Lemma 3.1]{AlPe_2018} in the case where $K=M$ has finite diameter and $h(0)=0$:

\begin{proposition}
Let $M$ be a pointed metric space, let $h\in\Lip(M)$ have bounded support and let $K\subset M$ contain the base point and the support of $h$. Then for any $\mu\in\lipfree{M}$ we have $\mu\circ T_h\in\lipfree{K}$ and
$$
\norm{\mu\circ T_h}\leq (\norm{h}_\infty+\rad(\supp(h))\lipnorm{h})\cdot\norm{\mu} .
$$
Moreover, if $h\geq 0$ and $\mu\geq 0$ then $\mu\circ T_h\geq 0$.
\end{proposition}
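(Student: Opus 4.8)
The plan is to read everything off Lemma~\ref{lm:multiplication_operator} and the paragraph immediately following it, where it is observed that $T_h\colon\Lip_0(K)\to\Lip_0(M)$ is \weaks-\weaks-continuous and hence is the adjoint of a bounded operator $W_h\colon\lipfree{M}\to\lipfree{K}$ with $\dual{W_h}=T_h$, so that $\norm{W_h}=\norm{T_h}$. The key step is to identify $\mu\circ T_h$ with $W_h(\mu)$. Given $\mu\in\lipfree{M}$ and an arbitrary $f\in\Lip_0(K)$, we compute, using $T_h=\dual{W_h}$ and the definition of the adjoint,
$$
(\mu\circ T_h)(f)=\mu\pare{\dual{W_h}f}=\duality{W_h\mu,f},
$$
where the middle term is $\mu$ evaluated at the Lipschitz function $T_h f=\dual{W_h}f\in\Lip_0(M)$ and the right-hand side is the pairing of $W_h\mu\in\lipfree{K}$ with $f\in\dual{\lipfree{K}}=\Lip_0(K)$. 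Since $f$ was arbitrary, this says precisely that $\mu\circ T_h$, a priori merely an element of $\dual{\Lip_0(K)}=\ddual{\lipfree{K}}$, coincides with the element $W_h(\mu)$ of $\lipfree{K}$. The norm estimate is then immediate: $\norm{\mu\circ T_h}=\norm{W_h\mu}\leq\norm{W_h}\norm{\mu}=\norm{T_h}\norm{\mu}$, and $\norm{T_h}\leq\norm{h}_\infty+\rad(\supp(h))\lipnorm{h}$ by Lemma~\ref{lm:multiplication_operator}.

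For the last assertion, assume $h\geq 0$ and $\mu\geq 0$. Since an element of $\lipfree{K}$ is positive exactly when it is nonnegative on the positive functions of $\Lip_0(K)$, it suffices to check that $(\mu\circ T_h)(f)\geq 0$ for every positive $f\in\Lip_0(K)$. For such an $f$, the function $T_h(f)$ is described by \eqref{eq:T_h}: it equals $fh$ on $K$ and vanishes off $K$, hence it is a positive element of $\Lip_0(M)$, because $f\geq 0$ on $K$ and $h\geq 0$ on all of $M\supseteq K$. As $\mu\geq 0$, we conclude $(\mu\circ T_h)(f)=\mu(T_h(f))\geq 0$, which finishes the proof.

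I do not expect a genuine obstacle here: the substance has been packaged into Lemma~\ref{lm:multiplication_operator}, and the only point requiring care is keeping the three relevant dualities straight — $\lipfree{M}$ with $\Lip_0(M)$, $\lipfree{K}$ with $\Lip_0(K)$, and the canonical inclusion $\lipfree{K}\subset\ddual{\lipfree{K}}$ — so that the identity $\mu\circ T_h=W_h(\mu)$, and in particular the claim that $\mu\circ T_h$ actually belongs to $\lipfree{K}$, is unambiguous.
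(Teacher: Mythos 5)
Your argument is correct and is essentially the paper's own: both identify $\mu\circ T_h$ with $W_h(\mu)$ via the adjoint relation $\duality{W_h(\mu),f}=\duality{\mu,T_h(f)}$, deduce the norm bound from Lemma~\ref{lm:multiplication_operator}, and obtain positivity from the fact that $T_h$ maps positive functions to positive functions. Your write-up just spells out the duality bookkeeping that the paper leaves implicit.
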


\begin{proof}
Simply notice that $W_h(\mu)=\mu\circ T_h$ acts as a functional on $\Lip_0(K)$, since $\duality{W_h(\mu),f}=\duality{\mu,T_h(f)}$ for any $f\in\Lip_0(K)$. The inequality is immediate from Lemma \ref{lm:multiplication_operator}.
If $h\geq 0$ then $T_h$ takes positive functions into positive functions and the second statement follows.
\end{proof}

We now have all the tools we need to prove the main result of the section:

\begin{proof}[Proof of Theorem \ref{tm:intersection_property}]
Let $Y=\lspan\set{\idealsub{M}{K_i}:i\in I}$. We will show that $\wscl{Y}=\idealsub{M}{K}$ where $K=\bigcap_i K_i$. This is enough, as the annihilator relations imply then that
\begin{align*}
\bigcap_{i\in I}\lipfreesub{M}{K_i} &= \bigcap_{i\in I}\idealsub{M}{K_i}_\perp = \pare{\bigcup_{i\in I}\idealsub{M}{K_i}}_\perp \\
&= Y_\perp = \pare{\wscl{Y}}_\perp = \idealsub{M}{K}_\perp = \lipfreesub{M}{K} \,.
\end{align*}
The inclusion $\wscl{Y}\subset\idealsub{M}{K}$ is clear. For the reverse inclusion, take $f\in\idealsub{M}{K}$ and let $U$ be a \weaks-neighborhood of $f$ in $\Lip_0(M)$; it suffices to show that $U$ intersects $Y$.

We may assume that $f$ has bounded support by Lemma \ref{lm:bounded_supp_dense}. So let $S=\supp(f)$, define $h\in\Lip(M)$ by 
$$
h(x)=\max\set{1-d(x,S),0}
$$
for $x\in M$, and let $A=\supp(h)\cup\set{0}$. Thus $0\leq h\leq 1$, $h\restricted_S=1$ and $h=0$ outside of the bounded set $A$. 

Let $T_h\colon\Lip_0(A)\rightarrow\Lip_0(M)$ be as in \eqref{eq:T_h} and let $\hat{f}=f\restricted_A\in\Lip_0(A)$.
Note that $T_h(\hat f)=f$. 
Let $V$ be a $w^*$-neighborhood of $\hat{f}$ such that $T_h(V) \subset U$.
Since $A$ is bounded, we may apply \cite[Theorem 3.3]{AlPe_2018} to get 
$$\bigcap_{i\in I}\lipfreesub{A}{K_i\cap A}=\lipfreesub{A}{K\cap A},$$ and it follows that:
\begin{align*}
\idealsub{A}{K\cap A} = \pare{\bigcap_{i\in I}\lipfreesub{A}{K_i\cap A}}^\perp &= \pare{\pare{\bigcup_{i\in I}\idealsub{A}{K_i\cap A}}_\perp}^\perp \\
&= \wscl{\lspan}\set{\idealsub{A}{K_i \cap A}:i\in I} .
\end{align*}
Now $\hat{f}\in\idealsub{A}{K \cap A}$, so there must exist ${g}\in V$ of the form ${g}={g}_1+\ldots+{g}_n$ where ${g}_k\in\idealsub{A}{K_{i_k}\cap A}$, $i_k\in I$ for $k=1,\ldots,n$.
To complete the proof, note that $T_h(g_k)\in\idealsub{M}{K_{i_k}}$ for every $k=1,\ldots,n$ by the definition of $T_h$ and the comments below Lemma \ref{lm:multiplication_operator}. Hence $T_h(g)\in U\cap Y$.
\end{proof}

It was shown in \cite[Proposition 3.5]{AlPe_2018} that it was possible to define supports for elements of any Lipschitz-free space $\lipfree{M}$ that satisfied the property in Theorem \ref{tm:intersection_property}. We may therefore now state this definition in general:

\begin{definition}
Let $M$ be a pointed metric space and $\mu\in\lipfree{M}$. 
The \emph{support} of $\mu$, denoted $\supp(\mu)$, is the intersection of all closed subsets $K$ of $M$ such that $\mu\in\lipfreesub{M}{K}$.
\end{definition}

Let us mention some elementary properties of supports. 
To begin with, letting $\set{K_i}$ be as in Theorem~\ref{tm:intersection_property} the family of all closed subsets of $M$ such that $\mu\in\lipfreesub{M}{K_i}$, we get 
\begin{equation}\label{eq:mu_in_free_of_own_support}
\mu\in\lipfreesub{M}{\supp(\mu)}.
\end{equation}
In fact, \eqref{eq:mu_in_free_of_own_support} is an equivalent statement of Theorem \ref{tm:intersection_property} (see \cite[Proposition 3.5]{AlPe_2018}).
One observation is that the base point cannot be an isolated point of $\supp(\mu)$, as that would imply $\mu\in\lipfreesub{M}{K}$ where $K=\supp(\mu)\setminus\set{0}$ is closed. In particular, note that $\supp(0)=\varnothing$. This shows that supports are not completely stable under changes of base point, as e.g. $\supp(\delta(p))=\set{p}$ for $p\neq 0$ but changing the base point to $p$ converts $\delta(p)$ into $0$, with empty support. However, the discrepancy is limited to the new base point and only in the case where this point is isolated in the support.

Note also that if $\mu=\sum_n \mu_n$ where $\mu_n\in\lipfree{M}$, then it follows directly from the definition that $\supp(\mu)\subset\cl{\bigcup_n\supp(\mu_n)}$. The same happens if $\mu=\lim_n \mu_n$. In particular, by taking finitely supported $\mu_n$ it follows that $\supp(\mu)$ is always a closed separable subset of $M$.

We now describe several equivalent characterizations of supports:

\begin{proposition}
\label{pr:equiv_char_support}
Let $M$ be a complete pointed metric space, $K$ a closed subset of $M$, and $\mu\in\lipfree{M}$. Then the following are equivalent:
\begin{enumerate}[label={\upshape{(\roman*)}}]
\item $\supp(\mu)\subset K$,
\item $\mu\in\lipfreesub{M}{K}$,
\item $\duality{\mu,f}=\duality{\mu,g}$ for any $f,g\in\Lip_0(M)$ such that $f\restricted_K=g\restricted_K$.
\end{enumerate}
\end{proposition}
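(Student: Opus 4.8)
The plan is to prove the cycle of implications (ii) $\Rightarrow$ (i) $\Rightarrow$ (iii) $\Rightarrow$ (ii), using the definition of support together with equation \eqref{eq:mu_in_free_of_own_support} and the basic annihilator identities $\lipfreesub{M}{K}^\perp = \idealsub{M}{K}$ and $\idealsub{M}{K}_\perp = \lipfreesub{M}{K}$ recalled in the Notation subsection.

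First, (ii) $\Rightarrow$ (i) is immediate from the definition: if $\mu \in \lipfreesub{M}{K}$ then $K$ is one of the closed sets in the intersection defining $\supp(\mu)$, so $\supp(\mu) \subset K$. Next, for (i) $\Rightarrow$ (iii), suppose $\supp(\mu) \subset K$. Then by \eqref{eq:mu_in_free_of_own_support} we have $\mu \in \lipfreesub{M}{\supp(\mu)} \subset \lipfreesub{M}{K}$; this is where Theorem \ref{tm:intersection_property} does the real work, since it is precisely what guarantees $\mu$ lies in the free space over its own support. Now take $f,g \in \Lip_0(M)$ with $f\restricted_K = g\restricted_K$. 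Then $f - g \in \idealsub{M}{K} = \lipfreesub{M}{K}^\perp$, and since $\mu \in \lipfreesub{M}{K}$ we get $\duality{\mu, f-g} = 0$, i.e.\ $\duality{\mu,f} = \duality{\mu,g}$.

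Finally, for (iii) $\Rightarrow$ (ii): assuming (iii), the linear functional $\mu$ annihilates every $h \in \Lip_0(M)$ with $h\restricted_K = 0$ (apply (iii) with $f = h$, $g = 0$). Hence $\mu \in \idealsub{M}{K}_\perp = \lipfreesub{M}{K}$, which is (ii). This closes the cycle, and in particular shows (i) $\Leftrightarrow$ (ii), which combined with \eqref{eq:mu_in_free_of_own_support} already confirms that $\supp(\mu)$ is the smallest closed set $K$ with $\mu \in \lipfreesub{M}{K}$.

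The only genuinely nontrivial ingredient is \eqref{eq:mu_in_free_of_own_support}, used in the step (i) $\Rightarrow$ (iii); but the excerpt already establishes this as a consequence of Theorem \ref{tm:intersection_property}, so within the scope of what may be assumed, the main obstacle is essentially dispatched for us. One small point of care is the convention $\lspan\varnothing = \set{0}$ and the identification $\lipfreesub{M}{K} = \lipfreesub{M}{K\cup\set{0}}$, which ensures the annihilator identities hold verbatim even when $0 \notin K$; I would note this in passing rather than belabor it. Everything else is a routine unwinding of the annihilator duality, so the proof should be only a few lines.
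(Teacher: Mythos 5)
Your proof is correct and uses exactly the same ingredients as the paper's: the definition of support for (ii)$\Rightarrow$(i), equation \eqref{eq:mu_in_free_of_own_support} (i.e.\ Theorem \ref{tm:intersection_property}) to pass from the support to membership in $\lipfreesub{M}{K}$, and the annihilator identities $\lipfreesub{M}{K}^\perp=\idealsub{M}{K}$, $\idealsub{M}{K}_\perp=\lipfreesub{M}{K}$ for the link with (iii). The only difference is cosmetic: you arrange the argument as the cycle (ii)$\Rightarrow$(i)$\Rightarrow$(iii)$\Rightarrow$(ii), whereas the paper proves (i)$\Leftrightarrow$(ii) and (ii)$\Leftrightarrow$(iii) separately.
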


\begin{proof}
(i)$\Rightarrow$(ii): This is an immediate consequence of~\eqref{eq:mu_in_free_of_own_support}.

(ii)$\Rightarrow$(i): This follows trivially from the definition.

(ii)$\Leftrightarrow$(iii): Notice that (iii) is equivalent to $\duality{\mu,f-g}=0$ whenever $f-g$ vanishes in $K$, that is, to $\mu\in\idealsub{M}{K}_\perp$.
\end{proof}

The equivalence (i)$\Leftrightarrow$(iii) shows that, in particular
\begin{equation}
\label{eq:support_Th}
\supp(\mu\circ T_h)\subset\supp(\mu)\cap\supp(h)
\end{equation}
for any $\mu\in\lipfree{M}$ and $h\in\Lip(M)$ with bounded support. Indeed, if $f,g\in\Lip_0(M)$ coincide on $\supp(\mu)\cap\supp(h)$ then $T_h(f)$ and $T_h(g)$ coincide on $\supp(\mu)$ and thus 
$$
\duality{\mu\circ T_h,f}=\duality{\mu,T_h(f)}=\duality{\mu,T_h(g)}=\duality{\mu\circ T_h,g}.
$$
The inclusion in \eqref{eq:support_Th} may be strict. For instance, if $\supp(\mu)$ intersects $\supp(h)$ only at its boundary then $\mu\circ T_h=0$.

The following characterization of the support will also be used often:

\begin{proposition}
\label{pr:equiv_points_support}
Let $M$ be a complete pointed metric space and $\mu\in\lipfree{M}$. Then $p\in M$ lies in the support of $\mu$ if and only if for every neighbourhood $U$ of $p$ there exists a function $f\in\Lip_0(M)$ whose support is contained in $U$ and such that $\duality{\mu,f}\neq 0$. Moreover, in that case we may take $f\geq 0$.
\end{proposition}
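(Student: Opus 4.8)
The plan is to prove the two implications of the equivalence separately and then obtain the positivity refinement by a splitting argument. For the ``if'' part I argue by contraposition: if $p\notin\supp(\mu)$, then, since $\supp(\mu)$ is by definition the intersection of all closed sets $K$ with $\mu\in\lipfreesub{M}{K}$, there is one such $K$ with $p\notin K$. Then $U=M\setminus K$ is an open neighbourhood of $p$, and every $f\in\Lip_0(M)$ with $\supp(f)\subset U$ vanishes on $K$, that is, $f\in\lipfreesub{M}{K}^\perp$; since $\mu\in\lipfreesub{M}{K}$ this forces $\duality{\mu,f}=0$. So the stated condition fails for this particular $U$.

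For the ``only if'' part I again argue by contraposition. Assume there is a neighbourhood $U$ of $p$ such that $\duality{\mu,f}=0$ for every $f\in\Lip_0(M)$ with $\supp(f)\subset U$, and choose $\rho>0$ with $B(p,\rho)\subset U$. Put $K=\set{x\in M:d(x,p)\geq\rho}$; this is closed and $p\notin K$. If $f\in\idealsub{M}{K}$, then $f$ vanishes on $K$, so $f$ is supported in $\cl{\set{x:d(x,p)<\rho}}\subset B(p,\rho)\subset U$, whence $\duality{\mu,f}=0$ by assumption. Thus $\mu$ annihilates $\idealsub{M}{K}$, i.e.\ $\mu\in\idealsub{M}{K}_\perp=\lipfreesub{M}{K}$, so $\supp(\mu)\subset K$ and in particular $p\notin\supp(\mu)$. (One could equally well invoke condition~(iii) of Proposition~\ref{pr:equiv_char_support} here, noting that two functions agreeing on $K$ differ by a function supported in $U$.)

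For the ``moreover'' clause, let $p\in\supp(\mu)$ and let $U$ be a neighbourhood of $p$. By the implication just proved there is $g\in\Lip_0(M)$ with $\supp(g)\subset U$ and $\duality{\mu,g}\neq 0$. Decompose $g=\pos{g}-\pos{(-g)}$, where $\pos{g}=\max\set{g,0}$ and $\pos{(-g)}=\max\set{-g,0}$ are nonnegative, Lipschitz, vanish at $0$ (because $g(0)=0$), and have support contained in $\supp(g)\subset U$. Since $\duality{\mu,\pos{g}}-\duality{\mu,\pos{(-g)}}=\duality{\mu,g}\neq 0$, one of these two numbers is nonzero, and the corresponding function serves as the required $f\geq 0$.

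I expect the whole argument to be fairly routine. The one step deserving attention is the topological reduction in the ``only if'' direction: one must pass to a closed ball $B(p,\rho)$ lying inside $U$ rather than using $U$ itself, so that a function vanishing on $\set{x:d(x,p)\geq\rho}$ is genuinely supported inside $U$; after that, the identifications $\lipfreesub{M}{K}^\perp=\idealsub{M}{K}$ and $\idealsub{M}{K}_\perp=\lipfreesub{M}{K}$ do the rest.
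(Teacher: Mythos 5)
Your proof is correct and follows essentially the same route as the paper: contraposition in both directions via the annihilator identities $\idealsub{M}{K}_\perp=\lipfreesub{M}{K}$ and $\lipfreesub{M}{K}^\perp=\idealsub{M}{K}$, shrinking the given neighbourhood to a closed ball so that functions vanishing off it are supported in $U$, and splitting $g=\pos{g}-\pos{(-g)}$ for the positivity claim. The only (harmless) variation is that in the direction $p\notin\supp(\mu)$ you take a single closed $K$ with $\mu\in\lipfreesub{M}{K}$ and $p\notin K$ directly from the definition, whereas the paper uses $K=\supp(\mu)$ itself via \eqref{eq:mu_in_free_of_own_support}.
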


\begin{proof}
Let $p\in M$. Assume that there exists a neighbourhood $U$ of $p$ such that for any function $f\in\Lip_0(M)$ with $\supp(f)\subset U$ we have $\duality{\mu,f}=0$. Take an open neighbourhood $V$ of $p$ for which $\overline{V}\subset U$. Then $\mu\in\idealsub{M}{M\setminus V}_{\perp}$ because every $f\in\idealsub{M}{M\setminus V}$ satisfies $\supp(f)\subset \overline{V}\subset U$. Hence $\mu\in\lipfreesub{M}{M\setminus V}$, so $\supp(\mu)\subset M\setminus V$ by the definition of $\supp(\mu)$ and $p\notin\supp(\mu)$.

On the other hand, suppose that $p\notin\supp(\mu)$ and let $U=M\setminus\supp(\mu)$. Then every $f\in\Lip_0(M)$ whose support is contained in $U$ obviously belongs to $\idealsub{M}{\supp(\mu)}=\lipfreesub{M}{\supp(\mu)}^{\perp}$. Therefore $\duality{\mu,f}=0$.

For the last statement, notice that $\duality{\mu,f}\neq 0$ implies that either $\duality{\mu,f^+}\neq 0$ or $\duality{\mu,f^-}\neq 0$.
\end{proof}

We finish this section by collecting some useful facts about positive elements of $\lipfree{M}$ and their supports:

\begin{proposition}
\label{pr:positive_facts}
Let $M$ be a complete pointed metric space and let $\mu$ and $\mu_n$, for $n\in\NN$, be positive elements of $\lipfree{M}$.
\begin{enumerate}[label={\upshape{(\alph*)}}]
\item $\norm{\mu}=\duality{\mu,\rho}$ where $\rho(x)=d(x,0)$.
\item $\norm{\sum_{n=1}^\infty\mu_n}=\sum_{n=1}^\infty\norm{\mu_n}$ whenever the last sum is finite.
\item If $f\in\Lip_0(M)$, $f\geq 0$ and $\duality{\mu,f}=0$, then $f=0$ on $\supp(\mu)$.
\item If $f\in\ball{\Lip_0(M)}$ and $\duality{\mu,f}=\norm{\mu}$, then $f=\rho$ on $\supp(\mu)$.
\end{enumerate}
\end{proposition}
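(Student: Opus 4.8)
The plan is to prove the four items in the stated order, with essentially all the work concentrated in (c); items (b) and (d) will then follow quickly from (a) and (c). The elementary observation underpinning everything is that if $f\in\ball{\Lip_0(M)}$ then $\abs{f(x)}=\abs{f(x)-f(0)}\le d(x,0)=\rho(x)$ for every $x\in M$, so that $\rho-f$ and $\rho+f$ are both positive elements of $\Lip_0(M)$. For (a): since $\lipnorm{\rho}\le 1$ we have $\duality{\mu,\rho}\le\norm{\mu}$; conversely, fixing $f\in\ball{\Lip_0(M)}$ and applying positivity of $\mu$ to $\rho-f\ge 0$ and to $\rho+f\ge 0$ gives $\abs{\duality{\mu,f}}\le\duality{\mu,\rho}$, and taking the supremum over $f$ yields $\norm{\mu}\le\duality{\mu,\rho}$.

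For (b): absolute convergence of $\sum_n\norm{\mu_n}$ makes $\mu:=\sum_n\mu_n$ norm-convergent in $\lipfree{M}$; each partial sum is positive, and the positive cone is norm-closed, being an intersection of closed half-spaces $\set{\phi:\duality{\phi,g}\ge 0}$ over positive $g\in\Lip_0(M)$, so $\mu\ge 0$. Applying (a) to $\mu$ and to every $\mu_n$, and using continuity of $\duality{\cdot,\rho}$, we obtain $\norm{\mu}=\duality{\mu,\rho}=\sum_n\duality{\mu_n,\rho}=\sum_n\norm{\mu_n}$.

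For (c): argue by contradiction. Suppose $f\ge 0$, $\duality{\mu,f}=0$, yet $f(p)=a>0$ for some $p\in\supp(\mu)$. By continuity of $f$, pick $r\in(0,1]$ so small that $f>a/2$ on $U:=B(p,r)$; note $U$ is bounded. Since $p\in\supp(\mu)$, Proposition~\ref{pr:equiv_points_support} provides $g\in\Lip_0(M)$ with $g\ge 0$, $\supp(g)\subset U$ and $\duality{\mu,g}\ne 0$; as $\supp(g)$ is bounded and $g(0)=0$, the function $g$ is bounded, say $\norm{g}_\infty=C$ (and $C>0$, else $g\equiv 0$). Set $c=2C/a$. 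Then $cf-g\ge 0$ everywhere: on $U$ one has $cf>ca/2=C\ge g$, and off $U$ one has $g=0\le cf$. Positivity of $\mu$ now gives $\duality{\mu,g}\le c\,\duality{\mu,f}=0$, while $g\ge 0$ gives $\duality{\mu,g}\ge 0$; hence $\duality{\mu,g}=0$, contradicting the choice of $g$.

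For (d): if $f\in\ball{\Lip_0(M)}$ and $\duality{\mu,f}=\norm{\mu}$, then $\rho-f\ge 0$ by the opening observation and $\duality{\mu,\rho-f}=\norm{\mu}-\norm{\mu}=0$ by (a), so (c) applied to $\rho-f$ forces $\rho-f=0$ on $\supp(\mu)$, i.e.\ $f=\rho$ there. The step I expect to require the most care is (c): one must pass from the abstract membership $p\in\supp(\mu)$ to an explicit, suitably localized positive test function via Proposition~\ref{pr:equiv_points_support}, verify that it is bounded, and then dominate it by a scalar multiple of $f$; the remaining items are formal manipulations with the positive cone together with (a).
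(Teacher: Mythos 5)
Your proposal is correct and follows essentially the same route as the paper: (a) by comparing test functions with $\rho$, (b) by evaluating the sum on $\rho$ via (a), (c) by localizing a positive test function from Proposition~\ref{pr:equiv_points_support} near a point where $f>0$ and dominating it by a multiple of $f$, and (d) by applying (c) to $\rho-f$. The only differences are cosmetic (you scale $f$ up instead of scaling the test function down, and you spell out the convergence and positivity of $\sum_n\mu_n$ in (b)), so there is nothing to change.
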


\begin{proof}
(a) We have $\rho\in\ball{\Lip_0(M)}$ and any $f\in\ball{\Lip_0(M)}$ satisfies $f\leq\rho$, hence $\duality{\mu,f}\leq\duality{\mu,\rho}$.

(b) Evaluate $\sum_n\mu_n$ on $\rho$ and apply (a).

(c)
Suppose $f(p)>0$ for some $p\in\supp(\mu)$, so there are $c>0$ and $r>0$ such that $f\geq c$ in $B(p,r)$. By Proposition \ref{pr:equiv_points_support} there exists $h\in\Lip_0(M)$ such that $\supp(h)\subset B(p,r)$, $h\geq 0$ and $\duality{\mu,h}>0$. Scale $h$ by a constant factor so that $h\leq c$. Then $f-h\geq 0$ but $\duality{\mu,f-h}<0$, a contradiction.

(d) Apply (c) to the function $\rho-f$.
\end{proof}

\begin{proposition}
\label{pr:positive_support}
Let $M$ be a complete pointed metric space and let $\mu,\lambda$ be positive elements of $\lipfree{M}$. If $\mu\leq\lambda$ then $\supp(\mu)\subset\supp(\lambda)$.
\end{proposition}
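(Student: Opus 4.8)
The plan is to reduce everything to the pointwise description of supports in Proposition~\ref{pr:equiv_points_support} and argue one point at a time: I will show that every $p\in\supp(\mu)$ also belongs to $\supp(\lambda)$, which gives the desired inclusion.

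First, fix $p\in\supp(\mu)$ and let $U$ be an arbitrary neighbourhood of $p$. By Proposition~\ref{pr:equiv_points_support} — and here the last clause of that proposition is essential — there exists $f\in\Lip_0(M)$ with $\supp(f)\subset U$, $f\geq 0$, and $\duality{\mu,f}\neq 0$. Since $\mu\geq 0$ and $f\geq 0$ we in fact have $\duality{\mu,f}>0$. Next I invoke the hypothesis $\mu\leq\lambda$: by definition this means $\duality{\lambda,g}\geq\duality{\mu,g}$ for every positive $g\in\Lip_0(M)$, and applying it to $f$ yields $\duality{\lambda,f}\geq\duality{\mu,f}>0$.

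Thus, for every neighbourhood $U$ of $p$ we have exhibited a function $f\in\Lip_0(M)$ with $\supp(f)\subset U$ and $\duality{\lambda,f}\neq 0$; by the converse implication in Proposition~\ref{pr:equiv_points_support} this forces $p\in\supp(\lambda)$. Since $p\in\supp(\mu)$ was arbitrary, $\supp(\mu)\subset\supp(\lambda)$. There is no genuine obstacle in this argument; the only thing to keep an eye on is the compatibility of the ingredients — the order $\mu\leq\lambda$ is precisely a statement about evaluation on positive functions, which is exactly the class of test functions that Proposition~\ref{pr:equiv_points_support} lets us use, so the two facts dovetail immediately.
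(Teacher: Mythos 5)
Your argument is correct and is essentially identical to the paper's proof: both fix $p\in\supp(\mu)$, use Proposition~\ref{pr:equiv_points_support} to produce a positive $f$ supported in a given neighbourhood with $\duality{\mu,f}>0$, apply $\mu\leq\lambda$ to get $\duality{\lambda,f}>0$, and conclude $p\in\supp(\lambda)$ by the same proposition. Nothing to add.
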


\begin{proof}
Let $p\in\supp(\mu)$ and $U$ be a neighborhood of $p$. By Proposition \ref{pr:equiv_points_support} there exists $f\in\Lip_0(M)$ such that $\supp(f)\subset U$, $f\geq 0$ and $\duality{\mu,f}>0$. But then $\duality{\lambda,f}\geq\duality{\mu,f}>0$, so $p\in\supp(\lambda)$ applying Proposition \ref{pr:equiv_points_support} again.
\end{proof}

\section{Applications to extremal structure}

In this section, we develop some techniques based on supports and weighted elements to obtain new results related to the extremal structure of $\lipfree{M}$, in particular to analyze the extreme points of its unit ball and its positive unit ball. First, let us recall the definition of the extremal elements we will be considering:

\begin{definition}
Let $C$ be a convex subset of a Banach space $X$ and $x\in C$. We will say that $x$ is:
\begin{itemize}
\item an \emph{extreme point} of $C$ if it cannot be written as $x=\frac{1}{2}(y+z)$ with $y,z\in C\setminus\set{x}$,
\item an \emph{exposed point} of $C$ if there is $\dual{x}\in\dual{X}$ such that $\duality{x,\dual{x}}>\duality{y,\dual{x}}$ for any $y\in C\setminus\set{x}$,
\item a \emph{preserved extreme point} of $C$ if it is an extreme point of $\wscl{C}$ in $\ddual{X}$.
\end{itemize}
\end{definition}

Observe that exposed points and preserved extreme points are always extreme points. We will be considering the cases $C=\ball{\lipfree{M}}$ and $C=\ball{\lipfree{M}}^+$, the positive unit ball i.e. the set of positive elements of $\ball{\lipfree{M}}$. 

\subsection{Exposed molecules}

In the study of the extremal structure of Lipschitz-free spaces, a special role is played by the elements of the form
$$
\mol{pq}=\frac{\delta(p)-\delta(q)}{d(p,q)}
$$
for $p\neq q\in M$; note that $\norm{\mol{pq}}=1$. These elements are called \emph{elementary molecules}, sometimes just \emph{molecules}. One of the reasons for their relevance is the fact that any preserved extreme point of $\ball{\lipfree{M}}$ must be a molecule \cite[Corollary 3.44]{Weaver2}, which implies easily that any extreme point with finite support must also be a molecule. It is currently conjectured that all extreme points of $\ball{\lipfree{M}}$ must be molecules, but this has only been proved in a handful of cases, like subsets of $\RR$-trees \cite{AlPePr_2019} (including ultrametric spaces), and countable compacta. The latter follows from \cite[Theorem 2.1]{Dalet_2015} and \cite[Corollary 4.41]{Weaver2}, see also \cite[Corollary 4.2]{Duality} (note that the proof of \cite[Corollary 4.41]{Weaver2} included in the second edition of \cite{Weaver2} is flawed although it may be fixed; the weaker version contained in the first edition of the book is correct).

It is simply a matter of writing down the corresponding convex combination to see that $\mol{pq}$ can only be an extreme point of $\ball{\lipfree{M}}$ if the metric segment
$$
[p,q]=\set{x\in M:d(p,x)+d(x,q)=d(p,q)}
$$
only contains the points $p$ and $q$. The main result in \cite{AlPe_2018} states that this necessary condition is also sufficient. 
Here, we improve that result and show that any molecule satisfying this condition is actually an exposed point of $\ball{\lipfree{M}}$.
This also provides a significantly shorter proof of \cite[Theorem 1.1]{AlPe_2018}.

\begin{theorem}
\label{tm:exposed}
Let $M$ be a complete pointed metric space and $p\neq q\in M$. Then the following are equivalent:
\begin{enumerate}[label={\upshape{(\roman*)}}]
\item $\mol{pq}$ is an extreme point of $\ball{\lipfree{M}}$,
\item $\mol{pq}$ is an exposed point of $\ball{\lipfree{M}}$,
\item $[p,q]=\set{p,q}$.
\end{enumerate}
\end{theorem}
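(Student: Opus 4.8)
The plan is to prove the cycle (iii)$\Rightarrow$(ii)$\Rightarrow$(i)$\Rightarrow$(iii), where only the first implication needs real work since (ii)$\Rightarrow$(i) is the trivial observation that exposed points are extreme, and (i)$\Rightarrow$(iii) is the easy direction already recalled before the statement (if $[p,q]$ contains a third point $z$, then $\mol{pq}$ is a genuine convex combination of $\mol{pz}$ and $\mol{zq}$, both of norm $1$ and distinct from $\mol{pq}$). So the heart of the matter is: assuming $[p,q]=\set{p,q}$, construct $f\in\Lip_0(M)$ with $\lipnorm{f}=1$ such that $\duality{\mu,f}<1=\duality{\mol{pq},f}$ for every $\mu\in\ball{\lipfree{M}}$ with $\mu\neq\mol{pq}$.

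The key step is producing the exposing functional. The natural first attempt is a ``bump'' variant of the function $x\mapsto d(x,q)-d(p,q)$ (which attains the norm at $\mol{pq}$ but along the whole segment). Since $[p,q]=\set{p,q}$, one expects a strict-triangle-inequality gain: the slack $\slack{x}=d(p,x)+d(x,q)-d(p,q)$ is positive for every $x\notin\set{p,q}$, and one wants $f$ to ``see'' this gain everywhere. A workable candidate is something like
\[
f(x)=\frac{d(p,q)^2-d(x,q)^2}{2\,d(p,q)}\quad\text{near }p,\qquad\text{truncated/modified to keep }\lipnorm{f}=1\text{ globally,}
\]
or, more robustly, a function of the form $f(x)=d(x,q)-d(p,q)$ perturbed by subtracting a small positive term that vanishes only at $p$ and $q$; for instance $f(x) = d(x,q) - d(p,q) - c\cdot g(x)$ where $g\geq 0$, $g(p)=g(q)=0$, $g>0$ elsewhere, and $c>0$ is chosen small enough that $\lipnorm{f}\leq 1$ still holds. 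One then checks that $\duality{\mol{pq},f}=1$ and, using the description of $\lipfree{M}$ via molecules (or directly, since $\duality{\mu,f}\le\lipnorm f\|\mu\|$ with equality forcing rigidity), that any $\mu$ with $\duality{\mu,f}=1=\|\mu\|$ must be ``concentrated'' on the set where $f$ achieves its defining slope, which the construction arranges to be exactly $\set{p,q}$, forcing $\mu=\mol{pq}$.

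I would in fact organize the ``forcing'' argument cleanly using the support machinery developed in Section 2. Suppose $\mu\in\ball{\lipfree{M}}$ satisfies $\duality{\mu,f}=1$. Write $\mu=\mu^+-\mu^-$ (or argue via positivity after composing with suitable weightings $T_h$): the equality case of $\duality{\mu,f}\le\|\mu\|$ combined with Proposition~\ref{pr:positive_facts}(d) — applied to the appropriate positive pieces — pins down the behaviour of $f$ on $\supp(\mu)$, and since $f$ has the property that no point outside $\set{p,q}$ can sit in the support of a norm-attaining $\mu$ (this is where $[p,q]=\set{p,q}$ enters, via strict positivity of the slack away from $p,q$), we get $\supp(\mu)\subset\set{p,q}$. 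By the intersection theorem (Theorem~\ref{tm:intersection_property}) and \eqref{eq:mu_in_free_of_own_support}, $\mu\in\lipfreesub{M}{\set{p,q}}=\lipfree{\set{0,p,q}}$, a finite-dimensional space, on which a direct computation shows $\mol{pq}$ is already exposed; hence $\mu=\mol{pq}$.

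The main obstacle I anticipate is the global Lipschitz-norm bookkeeping for the candidate $f$: it is easy to write down a function with the right local behaviour near $p$ and $q$ and the right strict inequality on $[p,q]$, but ensuring $\lipnorm{f}\le 1$ on \emph{all} of $M$ — including pairs of points far from $p$ and $q$, and pairs straddling the truncation region — requires care in choosing the perturbation $g$ (e.g. $g(x)=\min\set{\slack{x},R}$ for suitable $R$, or a smoothed version) and the constant $c$. A secondary technical point is handling unbounded $M$: one may need to compose with a weighting operator $T_h$ of bounded support, as in Section~2, to reduce to a bounded (indeed, via the support argument, finite) subset before the rigidity step — but the tools for this are exactly Lemma~\ref{lm:multiplication_operator} and \eqref{eq:support_Th}, already in place.
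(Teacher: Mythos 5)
Your reduction to (iii)$\Rightarrow$(ii) is right, but the two steps that constitute the actual content are both missing, and the specific route you sketch breaks down. First, the exposing functional is never constructed, and the candidates you propose do not work: a perturbation $f(x)=d(x,q)-d(p,q)-c\,g(x)$ with $g$ built from the slack $\slack{x}=d(p,x)+d(x,q)-d(p,q)$ fails to expose $\mol{pq}$ even among molecules. For a concrete obstruction take $M=\set{p,q,u,v}$ with $d(p,q)=\tfrac52$, $d(p,u)=d(u,v)=d(v,q)=1$, $d(p,v)=d(u,q)=2$; then $[p,q]=\set{p,q}$ (both slacks equal $\tfrac12>0$), but $g(u)=g(v)$ for any slack-based $g$, so $f(u)-f(v)=d(u,q)-d(v,q)=1=d(u,v)$ and $\duality{\mol{uv},f}=1=\duality{\mol{pq},f}$. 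This is precisely why the paper does not perturb $d(\cdot,q)$ but uses the Ivakhno--Kadets--Werner function $f_{pq}(x)=\frac{d(p,q)}{2}\frac{d(x,q)-d(x,p)}{d(x,q)+d(x,p)}+C$, whose relevant property (Lemma \ref{lemma:IKWfunction}) is not merely that its slope is $<1$ off $[p,q]$ but the \emph{quantitative} statement that $\duality{\mol{uv},f_{pq}}\geq 1-\varepsilon$ forces $u,v\in[p,q]_\varepsilon$.

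Second, your rigidity step is also gapped. General elements of $\lipfree{M}$ need not admit a decomposition $\mu=\mu^+-\mu^-$ into positive elements of $\lipfree{M}$, and Proposition \ref{pr:positive_facts}(d) only concerns positive $\mu$ normed by $f$ against $\rho$, so "applying it to the appropriate positive pieces" is not available. More fundamentally, even a functional $f$ whose slope is strictly less than $1$ on every pair other than $\set{p,q}$ would not immediately yield $\supp(\mu)\subset\set{p,q}$ for every norm-attaining $\mu\in\ball{\lipfree{M}}$: by Lemma \ref{l:folklore} one only gets representations $\mu=\sum_n a_n\mol{x_ny_n}$ with $\sum_n\abs{a_n}<1+\delta$, the infimum need not be attained, so one cannot conclude that each molecule in the representation has slope exactly $1$. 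Closing this gap requires exactly the uniform control of Lemma \ref{lemma:IKWfunction}(2) combined with the $\ell_1$-quotient representation, which is how Lemma \ref{lm:fpq_general} shows $\duality{\mu,f_{pq}}=1\Rightarrow\supp(\mu)\subset[p,q]$; your final step (support in $\set{p,q}$ forces $\mu=\mol{pq}$) is fine and agrees with the paper, but everything leading to it is unproved and, as the example shows, cannot be salvaged with the functions you propose.
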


It is only necessary to prove the implication (iii)$\Rightarrow$(ii).
In our argument we will use the following fact. It can be found already in the proof of Theorem 2.37 in \cite{Weaver2}; see also page 89 therein. We include a short direct proof for the sake of completeness.

\begin{lemma}
\label{l:folklore}
Let $M$ be a metric space and 
$$\widetilde{M}=(M \times M) \setminus \set{(x,x):x\in M}.$$ 
Let us define $Q:\ell_1(\widetilde{M})\to \mathcal F(M)$ by $e_{(x,y)} \mapsto m_{xy}$ and extend it linearly on $\lspan\set{e_{(x,y)}}$.
Then $Q$ extends to an onto norm-one mapping which satisfies $\norm{\mu}=\inf\set{\norm{a}_1: Qa=\mu}$ for every $\mu \in \lipfree{M}$, i.e. $Q$ is a quotient map.
\end{lemma}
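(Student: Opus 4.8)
The plan is to show three things: that $Q$ is well-defined and norm-one as a bounded operator, that it is onto, and finally that it is a metric quotient in the strong sense that the infimum of $\norm{a}_1$ over preimages of $\mu$ equals $\norm{\mu}$ and is actually the relevant quantity. First I would observe that each molecule $m_{xy}$ has norm $1$ in $\lipfree{M}$, so on $\lspan\set{e_{(x,y)}}$ the map $Q$ satisfies $\norm{Qa}\le\norm{a}_1$; since $\lspan\set{e_{(x,y)}}$ is dense in $\ell_1(\widetilde M)$, $Q$ extends uniquely to a linear operator of norm at most $1$, and the norm is exactly $1$ because $Qe_{(x,y)}=m_{xy}$ is a unit vector. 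Surjectivity follows because the range of $Q$ is a closed subspace (being the image of a quotient map, which will come out of the norm identity, or more simply because $Q$ has dense range and we will verify it is a metric surjection) containing all molecules $m_{xy}$, and $\lipfree{M}=\cl{\lspan}\,\embd(M)=\cl{\lspan}\set{m_{xy}:(x,y)\in\widetilde M}$ since $\embd(x)=d(x,0)\cdot m_{x0}$ for $x\ne 0$.

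The heart of the matter is the norm identity $\norm{\mu}=\inf\set{\norm{a}_1:Qa=\mu}$. The inequality $\norm{\mu}\le\inf\set{\norm{a}_1:Qa=\mu}$ is immediate from $\norm{Q}\le 1$. For the reverse inequality, fix $\mu\in\lipfree{M}$ and $\varepsilon>0$; I would first reduce to the case where $\mu$ is finitely supported. Indeed, finitely supported elements are dense in $\lipfree{M}$, and if I can approximate $\mu$ by such $\nu$ with $\norm{\mu-\nu}$ small and lift both with control on the $\ell_1$-norm, a standard iteration (writing $\mu$ as a rapidly convergent series of finitely supported increments) produces a preimage $a$ with $\norm{a}_1\le\norm{\mu}+\varepsilon$. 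So it suffices to treat $\mu=\sum_{j=1}^{m}c_j\embd(x_j)$ with the $x_j$ distinct and nonzero; but then $\mu=\sum_j c_j d(x_j,0)\,m_{x_j 0}$, which is visibly of the form $Qa$ with $\norm{a}_1=\sum_j|c_j|d(x_j,0)$. This is not yet $\norm{\mu}$, so the genuine content is: a finitely supported $\mu$ admits, for every $\varepsilon>0$, a representation $\mu=\sum_k \lambda_k m_{u_k v_k}$ with $\sum_k|\lambda_k|\le\norm{\mu}+\varepsilon$. This is exactly the statement that the "transportation cost" (optimal-transport, or Kantorovich-Rubinstein) norm of $\mu$ is computed by its representations as signed combinations of molecules — equivalently, by splitting $\mu$ into its positive and negative parts as measures and invoking the duality $\norm{\mu}=\sup\set{\duality{\mu,f}:\lipnorm{f}\le 1}$ together with an LP-duality / finite-dimensional Hahn--Banach argument to match mass optimally along pairs $(u_k,v_k)$.

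The main obstacle is precisely this last point: proving that the optimal $\ell_1$-lifting of a finitely supported $\mu$ has cost arbitrarily close to $\norm{\mu}$. I would handle it by the classical de la Vall\'ee Poussin / Kantorovich duality argument — write $\mu=\sum a_i\embd(p_i)-\sum b_j\embd(q_j)$ with $a_i,b_j>0$ and $\sum a_i=\sum b_j$ (adding the base point with appropriate weight if $\sum a_i\ne\sum b_j$), consider the finite-dimensional transportation polytope of couplings $\pi=(\pi_{ij})\ge 0$ with the prescribed marginals, and note that $\mu=\sum_{i,j}\pi_{ij}(\embd(p_i)-\embd(q_j))=\sum_{i,j}\pi_{ij}d(p_i,q_j)\,m_{p_i q_j}$, so that any such $\pi$ gives a lifting of $\ell_1$-norm $\sum_{i,j}\pi_{ij}d(p_i,q_j)$; minimizing over $\pi$ and invoking LP duality against Lipschitz "potentials" shows this minimum equals $\sup\set{\duality{\mu,f}:\lipnorm{f}\le 1}=\norm{\mu}$. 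Once the identity $\norm{\mu}=\inf\set{\norm{a}_1:Qa=\mu}$ is established for all $\mu$, it follows formally that $Q$ is onto (the infimum being finite for every $\mu$) and that $Q$ descends to an isometric isomorphism $\ell_1(\widetilde M)/\ker Q\to\lipfree{M}$, i.e. $Q$ is a quotient map in the stated sense. I would remark that this is well known and sketch only the reduction and the duality step, citing \cite[Theorem 2.37]{Weaver2} for details.
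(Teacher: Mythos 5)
Your proposal is correct, but it takes a genuinely different route from the paper at the key step. The paper, after extending $Q$ with $\norm{Q}=1$, proves $\ball{\lipfree{M}}^O\subset Q(\ball{\ell_1(\widetilde M)}^O)$ by a soft argument: since the set $V=\set{\mol{xy}}$ of molecules is symmetric and norming for $\Lip_0(M)$, Hahn--Banach separation gives $\ball{\lipfree{M}}=\cl{\conv}(V)\subset\cl{Q(\ball{\ell_1(\widetilde M)})}$, and then the standard successive-approximation lemma \cite[Lemma 2.23]{FHHMPZ} upgrades ``dense image of the open ball'' to ``contains the open ball'', which is exactly the quotient identity. You instead reduce to finitely supported $\mu$ by density plus a hand-rolled rapidly-convergent-series iteration (which is precisely what the cited FHHMPZ lemma packages), and obtain the near-optimal molecular representation for finitely supported elements via finite Kantorovich--Rubinstein/LP duality over the transportation polytope, with McShane extension and the balancing trick $\embd(0)=0$ correctly handling the passage from potentials on the finite set to $\ball{\Lip_0(M)}$. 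Your route is more constructive and quantitative -- it exhibits the optimal lifting as an optimal coupling -- but its heaviest ingredient, the LP duality step, is only sketched (with an appeal to \cite[Theorem 2.37]{Weaver2}, which is indeed where the paper says the lemma essentially appears), whereas the paper's argument avoids optimal transport entirely and derives everything from separation plus a textbook open-mapping-type lemma. One small caveat: stating surjectivity as a formal consequence of the norm identity is slightly circular as phrased (the infimum could a priori be over an empty set), but your iteration does construct an actual preimage, so the argument as a whole is sound.
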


\begin{proof}
The fact that $\norm{Q}=1$ is clear so we can extend $Q$ to the whole space with the same norm. Let us call the extension $Q$ again. 
We will prove that $B_{\Free(M)}^O\subset Q(B_{\ell_1(\widetilde{M})}^O)$.
For this it is enough to use \cite[Lemma 2.23]{FHHMPZ}, i.e. we need to check that $B_{\mathcal F(M)}^O\subset \overline{Q(B_{\ell_1(\widetilde{M})}^O)}$.
But we have
$$
B_{\Free(M)}^O\subset B_{\Free(M)}=\overline{\conv}(V) \subset \overline{Q(B_{\ell_1(\widetilde{M})})}=\overline{Q(B_{\ell_1(\widetilde{M})}^O)} ,
$$
where $V=\set{m_{xy}:(x,y)\in \widetilde{M}}$ is the set of molecules of $\lipfree{M}$; note that $\ball{\lipfree{M}}=\cl{\conv}(V)$ follows from the fact that $V=-V$ is norming for $\Lip_0(M)$.
\end{proof}

\begin{remark}
We remark that if $\mu$ is an extreme point of $\ball{\lipfree{M}}$ such that $\mu=Qa$ for some $a\in\ball{\ell_1(\widetilde{M})}$, then $\mu$ must be a molecule. Indeed, suppose that $\mu=\sum_{n=1}^\infty a_n\mol{x_ny_n}$ where $\sum_{n=1}^\infty\abs{a_n}=1=\norm{\mu}$; without loss of generality, assume that each $a_n\geq 0$ and that $a_1>0$. If $a_1=1$ then clearly $\mu=\mol{x_1y_1}$. Otherwise $a_1\in(0,1)$ and we have
$$
\mu=a_1\mol{x_1y_1}+(1-a_1)\sum_{n=2}^\infty \frac{a_n}{1-a_1}\mol{x_ny_n}
$$
where the series on the right-hand side is in $\ball{\lipfree{M}}$ since the sum of the coefficients is 1, so extremality implies $\mu=\mol{x_1y_1}$ again.
\end{remark}

The main idea behind the proof of Theorem~\ref{tm:exposed} is that if some $f \in B_{\Lip_0(M)}$ exposes $m_{pq}$ in $B_{\lipfree{M}}$, then $f$ exposes $m_{pq}$ in particular among the molecules.
Such a candidate for the exposing functional is well known: it is the function $f_{pq}$ defined by
$$
f_{pq}(x)=\frac{d(p,q)}{2}\frac{d(x,q)-d(x,p)}{d(x,q)+d(x,p)}+C
$$
for $x\in M$, where the constant $C$ is chosen so that $f_{pq}(0)=0$.
This function was introduced and studied in \cite{ikw2}, where the following properties were proved:

\begin{lemma}\label{lemma:IKWfunction} Let $M$ be a complete metric space and let $p\neq q\in M$. We have
\begin{enumerate}
\item $f_{pq}$ is Lipschitz, $\norm{f_{pq}}_{L}=1$ and $\duality{\mol{pq},f_{pq}}=1$.
\item If $u\neq v\in M$ and $\varepsilon\geq 0$ are such that $\duality{\mol{uv},f_{pq}}\geq 1-\varepsilon$, then $u,v\in [p,q]_\varepsilon$ where
$$
[p,q]_\varepsilon=\set{x\in M: d(p,x)+d(x,q)\leq \frac{1}{1-\varepsilon}d(p,q)} .
$$
\item If $u\neq v \in M$ and $\frac{f_{pq}(u)-f_{pq}(v)}{d(u,v)}=1$, then $u,v\in [p,q]$.
\end{enumerate} 
\end{lemma}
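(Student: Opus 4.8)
The plan is to reduce all three statements to the auxiliary function $\phi(x)=\frac{d(x,p)}{d(x,p)+d(x,q)}$, which is well defined and takes values in $[0,1]$ since $d(x,p)+d(x,q)\geq d(p,q)>0$ for every $x\in M$. A direct computation gives $\frac{d(x,q)-d(x,p)}{d(x,q)+d(x,p)}=1-2\phi(x)$, hence $f_{pq}=-d(p,q)\,\phi+c$ for a suitable constant $c$; in particular $f_{pq}$ is a bounded function (as $\abs{1-2\phi}\leq 1$), $\lipnorm{f_{pq}}=d(p,q)\lipnorm{\phi}$, and $\duality{\mol{uv},f_{pq}}=\frac{d(p,q)(\phi(v)-\phi(u))}{d(u,v)}$ for all $u\neq v$. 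Since $\phi(p)=0$ and $\phi(q)=1$, the last identity with $u=p$ and $v=q$ immediately yields $\duality{\mol{pq},f_{pq}}=1$.

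The crux is a pointwise Lipschitz estimate for $\phi$: for all $x\neq y$ in $M$,
\begin{equation*}
\abs{\phi(x)-\phi(y)}\leq\frac{d(x,y)}{d(x,p)+d(x,q)}\qquad\text{and}\qquad\abs{\phi(x)-\phi(y)}\leq\frac{d(x,y)}{d(y,p)+d(y,q)}.
\end{equation*}
To obtain it, abbreviate $a_1=d(x,p)$, $b_1=d(x,q)$, $a_2=d(y,p)$, $b_2=d(y,q)$; then $\phi(x)-\phi(y)=\frac{a_1b_2-a_2b_1}{(a_1+b_1)(a_2+b_2)}$, and rewriting the numerator as $a_1(b_2-b_1)+b_1(a_1-a_2)$ and using that $d(\cdot,p)$ and $d(\cdot,q)$ are $1$-Lipschitz gives $\abs{a_1b_2-a_2b_1}\leq(a_1+b_1)d(x,y)$, which proves the second bound; the first follows by symmetry. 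Since $d(x,p)+d(x,q)\geq d(p,q)$ for every $x$, this gives $\lipnorm{\phi}\leq 1/d(p,q)$, hence $\lipnorm{f_{pq}}\leq 1$; as moreover $f_{pq}(0)=0$ by the choice of $C$, we have $f_{pq}\in\Lip_0(M)$, and $\duality{\mol{pq},f_{pq}}=1$ then forces $\lipnorm{f_{pq}}=1$. This proves (1).

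For (2), assume $\duality{\mol{uv},f_{pq}}\geq 1-\varepsilon$, i.e. $d(p,q)(\phi(v)-\phi(u))\geq(1-\varepsilon)d(u,v)$. Bounding $\phi(v)-\phi(u)\leq\abs{\phi(v)-\phi(u)}$ by the estimate above with denominator $d(u,p)+d(u,q)$ and dividing by $d(u,v)>0$ yields $\frac{d(p,q)}{d(u,p)+d(u,q)}\geq 1-\varepsilon$, that is $d(u,p)+d(u,q)\leq\frac{1}{1-\varepsilon}d(p,q)$, so $u\in[p,q]_\varepsilon$; using instead the denominator $d(v,p)+d(v,q)$ gives $v\in[p,q]_\varepsilon$. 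Statement (3) is the case $\varepsilon=0$: then $d(u,p)+d(u,q)\leq d(p,q)$ and $d(v,p)+d(v,q)\leq d(p,q)$, which by the triangle inequality are equalities, i.e. $u,v\in[p,q]$.

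The argument has no serious obstacle; the only point requiring real care is the estimate in the second paragraph, and in particular the decision to keep the \emph{pointwise} denominators $d(\cdot,p)+d(\cdot,q)$ rather than bounding them below by $d(p,q)$ at once — that sharper form is exactly what (2) and (3) need. Everything else is routine triangle-inequality bookkeeping.
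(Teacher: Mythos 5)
Your proof is correct. Note that the paper itself does not prove this lemma at all: it quotes these properties of $f_{pq}$ from Ivakhno--Kadets--Werner \cite{ikw2}, so any comparison is with that reference rather than with an argument in the text. Your self-contained route is clean and does the job: writing $f_{pq}=-d(p,q)\,\phi+c$ with $\phi(x)=d(x,p)/(d(x,p)+d(x,q))$ is exact, the algebraic identity $\phi(x)-\phi(y)=\bigl(a_1b_2-a_2b_1\bigr)/\bigl((a_1+b_1)(a_2+b_2)\bigr)$ and the splitting $a_1(b_2-b_1)+b_1(a_1-a_2)$ are right, and the resulting \emph{pointwise} estimate $\abs{\phi(x)-\phi(y)}\leq d(x,y)/(d(y,p)+d(y,q))$ (and its symmetric twin) is precisely the sharp form that makes (2) and (3) fall out in one line each, with (1) following from the crude bound $d(x,p)+d(x,q)\geq d(p,q)$ together with $\duality{\mol{pq},f_{pq}}=1$. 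The only implicit assumption is $\varepsilon<1$ when you divide by $1-\varepsilon$ in (2), but that restriction is already built into the definition of $[p,q]_\varepsilon$ in the statement, so it is not a gap; likewise the remark that $f_{pq}(0)=0$ is harmless but not needed for the statement as given. In short: a correct, elementary verification of a result the paper only cites.
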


Let us remark at this point that if $[p,q]=\set{p,q}$ then $f_{pq}$ exposes $m_{pq}$ among molecules (immediate from Lemma~\ref{lemma:IKWfunction}~(3)) and also among those $\mu \in B_{\Free(M)}$ with finite support (or more generally such that $\norm{\mu}=\norm{a}_1$ in the representation coming from Lemma~\ref{l:folklore}).

Using the concept of support, we can now prove the next strengthening of Lemma~\ref{lemma:IKWfunction}~(3).

\begin{lemma}
\label{lm:fpq_general}
Let $M$ be a complete pointed metric space and $p\neq q\in M$. If $\mu\in\ball{\lipfree{M}}$ is such that $\duality{\mu,f_{pq}}=1$, then $\supp(\mu)\subset [p,q]$.
\end{lemma}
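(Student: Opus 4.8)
The plan is to show that if $p' \in \supp(\mu)$ then $p' \in [p,q]$, by arguing that $\mu$ must "concentrate its mass" near the metric segment. The key tool is Lemma~\ref{lemma:IKWfunction}~(2), which tells us that any molecule $\mol{uv}$ for which $\duality{\mol{uv}, f_{pq}}$ is close to $1$ must have $u, v$ close (in the sense of $[p,q]_\varepsilon$) to the segment. So first I would fix $p' \notin [p,q]$; since $[p,q]$ is closed, there is $r > 0$ and $\varepsilon > 0$ small with $B(p', r) \cap [p,q]_\varepsilon = \varnothing$, and in fact $B(p',r) \cap \set{x : d(p,x)+d(x,q) \le \tfrac{1}{1-\varepsilon} d(p,q)} = \varnothing$. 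The goal is then to produce $g \in \Lip_0(M)$ supported in $B(p',r)$ with $\duality{\mu, g} \neq 0$, contradicting $p' \in \supp(\mu)$ via Proposition~\ref{pr:equiv_points_support}.

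The heart of the argument is a "bite out of $f_{pq}$" construction. Assume for contradiction $p' \in \supp(\mu)$. Using Proposition~\ref{pr:equiv_points_support}, pick $h \in \Lip_0(M)$ with $\supp(h) \subset B(p', r)$, $h \geq 0$, $\lipnorm{h} \le 1$ (rescale), and $\duality{\mu, h} > 0$. Now consider $f_{pq} - th$ for small $t > 0$. The point is to show $\lipnorm{f_{pq} - th} \leq 1$ for $t$ small enough: then $\duality{\mu, f_{pq} - th} = 1 - t\duality{\mu,h} < 1 \leq \norm{f_{pq}-th}_L \cdot \norm{\mu} \le 1$ — wait, that only gives a contradiction with $\norm{\mu} \le 1$ if we instead show $\lipnorm{f_{pq}-th} < 1$, which is too strong. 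The cleaner route: show that for suitable small $t$, $\lipnorm{f_{pq} - th} \le 1$ still holds, so $f_{pq} - th \in \ball{\Lip_0(M)}$, and then $1 = \duality{\mu, f_{pq}} > \duality{\mu, f_{pq} - th} \le \norm{\mu} \le 1$ unless $\duality{\mu, h} = 0$, a contradiction. To establish $\lipnorm{f_{pq} - th} \le 1$: the only pairs $u,v$ where the difference quotient of $f_{pq}$ is close to $1$ — say exceeds $1 - \delta$ for small $\delta$ — force $u, v \in [p,q]_\delta$ by Lemma~\ref{lemma:IKWfunction}~(2) applied to $\mol{uv}$ (since $\duality{\mol{uv}, f_{pq}} = \tfrac{f_{pq}(u)-f_{pq}(v)}{d(u,v)}$), and for $\delta \le \varepsilon$ such pairs lie outside $B(p',r)$, where $h$ vanishes, so subtracting $th$ does not increase the quotient there. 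On the complementary set of pairs, where the $f_{pq}$ quotient is $\le 1 - \delta$, subtracting $th$ with $t \le \delta$ (recall $\lipnorm{h} \le 1$) keeps the quotient $\le 1$. One must also handle mixed pairs where exactly one of $u, v$ is in $B(p',r)$: here $h(v) = 0$, so $\abs{(f_{pq}-th)(u) - (f_{pq}-th)(v)} \le \abs{f_{pq}(u)-f_{pq}(v)} + t h(u)$, and if the $f_{pq}$-quotient is $\le 1-\delta$ we are fine as before, while if it is $> 1-\delta$ then $u \in [p,q]_\delta$, contradicting $u \in B(p',r)$ once $\delta \le \varepsilon$.

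I expect the main obstacle to be the careful casework in verifying $\lipnorm{f_{pq} - th} \le 1$, in particular pinning down a single threshold $\delta > 0$ (hence a single $t > 0$) that works uniformly: the set of pairs $(u,v)$ with large $f_{pq}$-difference quotient needs to be separated from $B(p',r)$ with a quantitative margin, which is exactly what the strict separation $B(p',r) \cap [p,q]_\varepsilon = \varnothing$ (for some $\varepsilon>0$) provides once we observe $[p,q] = \bigcap_{\varepsilon > 0}[p,q]_\varepsilon$ and use completeness/closedness. Everything else is bookkeeping: assembling $h$ from Proposition~\ref{pr:equiv_points_support}, the rescaling, and the final contradiction chain $1 = \duality{\mu, f_{pq}} = \duality{\mu, f_{pq}-th} + t\duality{\mu,h} \le 1 + t\duality{\mu,h}$, forcing $\duality{\mu,h} \le 0$.
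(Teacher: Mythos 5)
Your overall strategy---perturbing the exposing function $f_{pq}$ by a small bump supported near a putative point $p'\in\supp(\mu)\setminus[p,q]$, and using Lemma~\ref{lemma:IKWfunction}~(2) to control the Lipschitz constant of the perturbed function---is viable and genuinely different from the paper's proof, which instead writes $\mu=\sum_n a_n\mol{x_ny_n}$ with $\sum_n\abs{a_n}$ close to $1$ via the quotient map of Lemma~\ref{l:folklore}, shows that the molecules with $\duality{\mol{x_ny_n},f_{pq}}$ far from $1$ carry small total mass, and concludes $\mu\in\lipfreesub{M}{[p,q]_\varepsilon}$ for every $\varepsilon>0$. Your casework for the norm estimate is essentially correct: any pair $u\neq v$ with $\abs{f_{pq}(u)-f_{pq}(v)}/d(u,v)>1-\delta$ lies in $[p,q]_\delta\subset[p,q]_\varepsilon$ (for $\delta\le\varepsilon$, after swapping $u,v$ if the quotient is negative), hence outside $B(p',r)$ where $h$ vanishes, while all remaining pairs tolerate a perturbation with $t\lipnorm{h}\le\delta$; the ``mixed pairs'' need no separate treatment since the dichotomy depends only on the size of the $f_{pq}$-quotient.

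However, the final step as written fails, precisely at the point where you hesitated: subtracting the bump cannot yield a contradiction. From $\lipnorm{f_{pq}-th}\le 1$ and $\norm{\mu}\le 1$ you only get $\duality{\mu,f_{pq}-th}\le 1$, and since $\duality{\mu,f_{pq}-th}=1-t\duality{\mu,h}$, your chain $1\le 1+t\duality{\mu,h}$ forces $\duality{\mu,h}\ge 0$, not $\le 0$; this is perfectly consistent with $\duality{\mu,h}>0$, so no contradiction is reached. The repair is to perturb in the direction that \emph{increases} the pairing: take $h$ supported in $B(p',r)$ with $\lipnorm{h}\le 1$ and $\duality{\mu,h}\neq 0$ (positivity of $h$ is irrelevant here; replace $h$ by $-h$ if necessary so that $\duality{\mu,h}>0$). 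Your Lipschitz estimate applies verbatim to $f_{pq}+th$ for small $t>0$, so $f_{pq}+th\in\ball{\Lip_0(M)}$, and then $\duality{\mu,f_{pq}+th}=1+t\duality{\mu,h}>1\ge\norm{\mu}$, a genuine contradiction. With this sign corrected, your argument goes through and gives an alternative, more ``local'' proof than the paper's decomposition argument.
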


\begin{proof}
Let $\delta,\varepsilon>0$. It follows from Lemma \ref{l:folklore} that we may find an expression $\mu=\sum_{n=1}^\infty a_n\mol{x_ny_n}$ where $x_n\neq y_n\in M$ for $n\in\NN$ and $\sum_n\abs{a_n}<1+\delta\varepsilon$. Let $I=\set{n\in\NN:\abs{\duality{\mol{x_ny_n},f_{pq}}}\geq 1-\varepsilon}$.
Then
\begin{align*}
1 = \duality{\mu,f_{pq}} &= \sum_{n=1}^\infty a_n\duality{\mol{x_ny_n},f_{pq}} \\
&= \sum_{n\in I} a_n\duality{\mol{x_ny_n},f_{pq}} + \sum_{n\in\NN\setminus I} a_n\duality{\mol{x_ny_n},f_{pq}} \\
&\leq \sum_{n\in I} \abs{a_n} + (1-\varepsilon)\sum_{n\in\NN\setminus I} \abs{a_n} \\
&< 1+\delta\varepsilon-\varepsilon\sum_{n\in\NN\setminus I} \abs{a_n} .
\end{align*}
Hence $\sum_{n\in\NN\setminus I}\abs{a_n}<\delta$, and it follows that
$$
\norm{\mu-\sum_{n\in I}a_n\mol{x_ny_n}}=\norm{\sum_{n\in\NN\setminus I}a_n\mol{x_ny_n}}\leq\sum_{n\in\NN\setminus I}\abs{a_n}<\delta .
$$
Notice that $x_n,y_n\in [p,q]_\varepsilon$ if $n\in I$, by Lemma \ref{lemma:IKWfunction}~(2).
Thus $\mu \in \lipfreesub{M}{[p,q]_\varepsilon}+\delta B_{\lipfree{M}}$.
Since $\delta$ was arbitrary, this shows that $\mu \in \lipfreesub{M}{[p,q]_\varepsilon}$.
But $\varepsilon>0$ was also arbitrary, so $\supp(\mu)\subset\bigcap_{\varepsilon>0} [p,q]_\varepsilon=[p,q]$.
\end{proof}

\begin{proof}[Proof of Theorem~\ref{tm:exposed}]
Assume (iii). We can assume without loss of generality that $0=q$. Indeed, a change of the base point in $M$ induces a linear isometry between the corresponding Lipschitz-free spaces which preserves the molecules. We will prove that $\mol{pq}$ is exposed by $f_{pq}$. Assume that $\mu\in B_{\lipfree{M}}$ is such that $\duality{\mu,f_{pq}}=1$. By Lemma \ref{lm:fpq_general}, $\mu$ must be supported on $[p,q]=\set{p,q}$, hence on $\set{p}$. Thus $\mu=\pm\mol{pq}$ but only the choice of the plus sign is reasonable. This proves (ii).
\end{proof}

\subsection{Extreme points of the positive ball}

Let us now consider the extreme points of $\ball{\lipfree{M}}^+$. We will characterize them and show that all of them are actually preserved. To achieve the latter, we require the following general fact about positive functionals on $\Lip_0(M)$:

\begin{lemma}
\label{lm:positive_functional_lemma}
Let $M$ be a pointed metric space and let $\mu,\lambda\in\dual{\Lip_0(M)}$ be such that $0\leq\mu\leq\lambda$. If $\lambda\in\lipfree{M}$, then $\mu\in\lipfree{M}$.
\end{lemma}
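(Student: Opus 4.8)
The plan is to characterize membership in $\lipfree{M}$ via a separation/annihilator argument. Recall that $\lipfree{M}$ sits inside $\dual{\Lip_0(M)}$ as a norm-closed subspace, so a functional $\mu\in\dual{\Lip_0(M)}$ belongs to $\lipfree{M}$ if and only if $\mu$ is \weaks-continuous, equivalently if and only if the restriction $\mu\restricted_{\ball{\Lip_0(M)}}$ is \weaks-continuous. Since we are assuming $0\le\mu\le\lambda$ with $\lambda\in\lipfree{M}$, the natural strategy is to show that $\mu$ inherits \weaks-continuity from $\lambda$ on norm-bounded sets, using positivity to control $\abs{\duality{\mu,f}}$ by quantities involving $\lambda$.

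First I would reduce to a convergence statement: by the Banach--Dieudonné theorem it suffices to check that $\mu$ is \weaks-continuous on $\ball{\Lip_0(M)}$, and since the \weaks\ topology agrees with pointwise convergence there, I need only show that $\duality{\mu,f_\gamma}\to\duality{\mu,f}$ whenever $f_\gamma\to f$ pointwise with $f_\gamma,f\in\ball{\Lip_0(M)}$. The key estimate will come from positivity: for positive $g\in\Lip_0(M)$ one has $0\le\duality{\mu,g}\le\duality{\lambda,g}$. To exploit this for general $f_\gamma-f$, I would use the truncation functions $\Lambda_r$ from Lemma \ref{lm:bounded_supp_dense}: writing $g_r=\min\set{\max\set{f_\gamma-f,-2\Lambda_r},2\Lambda_r}$ or a similar bounded-support modification, I can split the estimate into a "local" part on $B(0,r)$, where pointwise convergence together with the fact that $\lambda\in\lipfree{M}$ (hence $\lambda$ is \weaks-continuous and in particular behaves well on the bounded piece) forces the contribution to be small, and a "tail" part outside $B(0,r)$, where positivity gives $\abs{\duality{\mu,(f_\gamma-f)\restricted_{M\setminus B(0,r)}}}\le 2\duality{\lambda,\Lambda_r'}$ for a suitable positive tail bump $\Lambda_r'$, and this can be made uniformly small by choosing $r$ large since $\lambda\in\lipfree{M}=\cl{\lspan}\,\embd(M)$ and $\Lambda_r'\to 0$ pointwise with uniformly bounded Lipschitz constant (so $\duality{\lambda,\Lambda_r'}\to 0$).

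An alternative, possibly cleaner route: show directly that $\mu$ is represented by an element of $\lipfree{M}$ by a Hahn--Banach / weak-star closure argument. Consider the positive functional $\lambda-\mu\ge 0$; both $\mu$ and $\lambda-\mu$ are dominated by $\lambda$, and one can try to approximate each of them in the \weaks\ topology by finitely supported positive measures whose masses are controlled by testing against $\rho(x)=d(x,0)$ (using Proposition \ref{pr:positive_facts}(a), which gives $\norm{\mu}=\duality{\mu,\rho}\le\duality{\lambda,\rho}=\norm{\lambda}<\infty$, so $\mu$ is at least bounded). The boundedness of $\mu$ reduces the problem to showing $\mu$ annihilates $\predual{(\lipfree{M})}{}^{\perp}$-type obstructions, i.e. to showing $\mu$ is in the \weaks-closure of $\lipfree{M}$ intersected with a ball, which is $\lipfree{M}$ itself by Goldstine-type reasoning combined with the intersection machinery.

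The main obstacle I anticipate is the tail estimate: pointwise convergence of $f_\gamma$ to $f$ gives no uniform control at infinity, so the whole argument hinges on using positivity plus $\lambda\in\lipfree{M}$ to kill the part of $\mu$ living "near infinity." Making precise that $\duality{\lambda,\Lambda_r'}\to 0$ as $r\to\infty$ — which is exactly the statement that $\lambda$, being in $\lipfree{M}$, cannot charge the "point at infinity" — is the crux; once that is in hand, the local part is handled routinely by the pointwise-convergence-equals-\weaks\ fact on bounded sets applied to $\lambda$ (or to $\mu$ restricted to a bounded metric space, where the bounded-diameter theory already applies). I would also need to be slightly careful that the truncations $g_r$ actually lie in $\Lip_0(M)$ with controlled Lipschitz constant, but this is precisely the content of the construction in Lemma \ref{lm:bounded_supp_dense}.
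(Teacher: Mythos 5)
Your first route---verifying weak$^\ast$-continuity of $\mu$ on $\ball{\Lip_0(M)}$ against pointwise convergent nets and concluding via Banach--Dieudonn\'e---is a sound strategy and genuinely different from the paper's proof, which instead checks that $\mu$ is \emph{normal} (i.e.\ behaves well on bounded monotone nets, where domination by $\lambda$ applies directly) and then invokes Weaver's nontrivial theorem that positive normal functionals are weak$^\ast$-continuous. However, as written your argument has a gap at the ``local'' part. You handle the tail by dominating it with a positive bump and passing through $\lambda$, which is fine; but for the bounded piece you only say that pointwise convergence plus weak$^\ast$-continuity of $\lambda$ ``forces the contribution to be small'', or alternatively that ``the bounded-diameter theory already applies'' to $\mu$ restricted to a bounded set. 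Neither justification works as stated: weak$^\ast$-continuity of $\lambda$ says nothing by itself about $\duality{\mu,g_{r,\gamma}}$, and the claim that the restriction of $\mu$ to a bounded piece is already known to be weak$^\ast$-continuous is precisely the bounded case of the lemma being proved---it is not contained in the bounded-diameter results used in the paper, and invoking it for $\mu$ is circular. The inequality you record, $0\le\duality{\mu,g}\le\duality{\lambda,g}$, is stated only for positive $g$, and the step that transfers control from $\lambda$ to $\mu$ for \emph{arbitrary} test functions is never supplied.

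The missing ingredient is the domination inequality $\abs{\duality{\mu,g}}\le\duality{\mu,\abs{g}}\le\duality{\lambda,\abs{g}}$ for every $g\in\Lip_0(M)$, which follows from $\abs{g}\pm g\ge 0$, $\mu\ge 0$ and $\lambda-\mu\ge 0$, together with the fact that $\abs{g}\in\Lip_0(M)$ with $\lipnorm{\abs{g}}\le\lipnorm{g}$. Once this is in hand, your truncation with $\Lambda_r$ and the local/tail splitting become unnecessary: if $f_\gamma\to f$ pointwise with $f_\gamma,f\in\ball{\Lip_0(M)}$, then $\abs{f_\gamma-f}\to 0$ pointwise with $\lipnorm{\abs{f_\gamma-f}}\le 2$, hence weak$^\ast$, so $\duality{\lambda,\abs{f_\gamma-f}}\to 0$ because $\lambda\in\lipfree{M}$, and therefore $\duality{\mu,f_\gamma}\to\duality{\mu,f}$; Banach--Dieudonn\'e then yields weak$^\ast$-continuity of $\mu$, i.e.\ $\mu\in\lipfree{M}$. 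Note that this repaired argument is in fact more elementary than the paper's, since it bypasses Weaver's theorem on normal functionals altogether. Your alternative second route, on the other hand, cannot be salvaged in the form described: by Goldstine's theorem the weak$^\ast$ closure of $\ball{\lipfree{M}}$ in $\dual{\Lip_0(M)}$ is the entire bidual ball, so placing $\mu$ in such a closure gives no information and cannot distinguish elements of $\lipfree{M}$ from general positive functionals.
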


For the proof, let us recall that an element $\phi$ of $\dual{\Lip_0(M)}$ is \emph{normal} if it is such that $\duality{f_i,\phi}\rightarrow\duality{f,\phi}$ for any bounded net $(f_i)$ in $\Lip_0(M)$ that converges to $f$ pointwise and monotonically. Equivalently, $\phi$ is normal if $\duality{f_i,\phi}\rightarrow 0$ whenever $f_i\in\ball{\Lip_0(M)}$ and $f_i(x)$ decreases to $0$ for each $x\in M$. It is clear that every \weaks-continuous element of $\dual{\Lip_0(M)}$ is normal. Whether the converse holds is an open problem, but it was solved in the affirmative by Weaver for positive functionals \cite[Theorem 3.22]{Weaver2}.

\begin{proof}[Proof of Lemma \ref{lm:positive_functional_lemma}]
Let $(f_i)$ be a net such as stated above. Then we have $0\leq\duality{f_i,\mu}\leq\duality{f_i,\lambda}$ for each $i$. Since $\lambda$ is normal, $\duality{f_i,\lambda}\rightarrow 0$ and so $\duality{f_i,\mu}\rightarrow 0$ too. Hence $\mu$ is normal, and \cite[Theorem 3.22]{Weaver2} shows that it is \weaks-continuous.
\end{proof}

\begin{theorem}
Let $M$ be a complete pointed metric space. The extreme points of $\ball{\lipfree{M}}^+$ are precisely the normalized evaluation functionals, i.e. $0$ and $\embd(x)/d(x,0)$ for $x\in M\setminus\set{0}$. Moreover, all of them are preserved.
\end{theorem}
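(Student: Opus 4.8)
The plan is to prove the three assertions separately: that $0$ and each $\embd(p)/d(p,0)$ (for $p\neq 0$) is an extreme point of $\ball{\lipfree{M}}^+$, that no other element is, and that all of them are preserved. That $0$ is extreme is immediate: if $0=\frac12(\mu+\lambda)$ with $\mu,\lambda\in\ball{\lipfree{M}}^+$, then $\norm{\mu}+\norm{\lambda}=\duality{\mu,\rho}+\duality{\lambda,\rho}=\duality{\mu+\lambda,\rho}=0$ by Proposition~\ref{pr:positive_facts}(a), hence $\mu=\lambda=0$. For $p\neq 0$, suppose $\embd(p)/d(p,0)=\frac12(\mu+\lambda)$ with $\mu,\lambda\in\ball{\lipfree{M}}^+$. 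Then $0\leq\mu\leq\mu+\lambda=\frac{2}{d(p,0)}\embd(p)$, so $\supp(\mu)\subset\supp(\embd(p))=\set{p}$ by Proposition~\ref{pr:positive_support}, whence $\mu\in\lipfreesub{M}{\set{p}}=\RR\,\embd(p)$ by \eqref{eq:mu_in_free_of_own_support}. Writing $\mu=t\,\embd(p)$, positivity forces $t\geq 0$ and $\norm{\mu}=t\,d(p,0)\leq 1$, i.e. $t\leq 1/d(p,0)$; likewise $\lambda=s\,\embd(p)$ with $0\leq s\leq 1/d(p,0)$. Since $t+s=2/d(p,0)$, we get $t=s=1/d(p,0)$, so $\mu=\lambda=\embd(p)/d(p,0)$.

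For the converse, let $\mu$ be an extreme point of $\ball{\lipfree{M}}^+$ with $\mu\neq 0$. First, $\norm{\mu}=1$: otherwise, picking $s\in(0,1)$ small enough that $(1+s)\norm{\mu}\leq 1$, we have $\mu=\frac12((1-s)\mu+(1+s)\mu)$ with $(1-s)\mu$ and $(1+s)\mu$ distinct elements of $\ball{\lipfree{M}}^+$, both different from $\mu$, a contradiction. Next, $\supp(\mu)$ is a singleton. If not, choose $p\neq q$ in $\supp(\mu)$, fix $r>0$ with $2r<d(p,q)$, and let $h\in\Lip(M)$ satisfy $0\leq h\leq 1$, $h\equiv 1$ on $B(p,r)$, and $\supp(h)\subset B(p,2r)$. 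Consider the weighted element $\nu=\mu\circ T_h$, which lies in $\lipfree{M}$ and is positive since $\mu,h\geq 0$; moreover $\nu\leq\mu$, because for positive $f\in\Lip_0(M)$ we have $\duality{\nu,f}=\duality{\mu,fh}\leq\duality{\mu,f}$, using that $fh\in\Lip_0(M)$ and $0\leq fh\leq f$ pointwise. By \eqref{eq:support_Th}, $\supp(\nu)\subset\supp(h)$, so $q\notin\supp(\nu)$; and $\nu\neq 0$, since any $g\geq 0$ in $\Lip_0(M)$ with $\supp(g)\subset B(p,r)$ satisfies $\duality{\nu,g}=\duality{\mu,gh}=\duality{\mu,g}$, and some such $g$ has $\duality{\mu,g}\neq 0$ because $p\in\supp(\mu)$ (Proposition~\ref{pr:equiv_points_support}). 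Also $\mu-\nu\neq 0$, otherwise $q\in\supp(\mu)=\supp(\nu)$, impossible since $q\notin\supp(h)$. Set $a=\norm{\nu}$; then $\norm{\mu-\nu}=\duality{\mu-\nu,\rho}=\duality{\mu,\rho}-\duality{\nu,\rho}=1-a$ by Proposition~\ref{pr:positive_facts}(a), and $a\in(0,1)$ since $\nu\neq 0\neq\mu-\nu$. Now $\mu=a\cdot(\nu/a)+(1-a)\cdot((\mu-\nu)/(1-a))$ exhibits $\mu$ as a proper convex combination of two elements of $\ball{\lipfree{M}}^+$ that are distinct: if they agreed, $\mu$ would equal their common value, forcing $\supp(\mu)=\supp(\nu)$, which is impossible. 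Hence $\mu$ is not extreme, a contradiction. Therefore $\supp(\mu)=\set{p}$ for some $p$, and since the base point is never isolated in a support we have $p\neq 0$; thus $\mu\in\lipfreesub{M}{\set{p}}=\RR\,\embd(p)$, and $\mu\geq 0$ together with $\norm{\mu}=1$ gives $\mu=\embd(p)/d(p,0)$.

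To upgrade these to preserved extreme points, let $x\in\set{0}\cup\set{\embd(p)/d(p,0):p\in M\setminus\set{0}}$ and suppose $x=\frac12(\phi+\psi)$ with $\phi,\psi$ in $\wscl{\ball{\lipfree{M}}^+}\subset\ddual{\lipfree{M}}=\dual{\Lip_0(M)}$. Since $\ball{\lipfree{M}}^+$ consists of positive functionals of norm at most $1$ and both conditions are preserved under \weaks-limits, $\phi$ and $\psi$ are positive functionals of norm $\leq 1$, and $0\leq\phi\leq\phi+\psi=2x\in\lipfree{M}$. By Lemma~\ref{lm:positive_functional_lemma}, $\phi\in\lipfree{M}$, and similarly $\psi\in\lipfree{M}$; thus $\phi,\psi\in\ball{\lipfree{M}}^+$. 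Since $x$ is an extreme point of $\ball{\lipfree{M}}^+$ by the previous paragraphs, $\phi=\psi=x$, i.e. $x$ is an extreme point of $\wscl{\ball{\lipfree{M}}^+}$ in $\ddual{\lipfree{M}}$.

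The step I expect to be the main obstacle is showing that an extreme point $\mu\neq 0$ has singleton support. This is precisely where the weighting operator $T_h$ of Lemma~\ref{lm:multiplication_operator} is essential: it is used to extract from $\mu$ a nonzero positive summand $\nu\leq\mu$ whose support misses a prescribed point of $\supp(\mu)$, after which $\nu$ and $\mu-\nu$ have different supports and therefore furnish a nontrivial convex decomposition of $\mu$.
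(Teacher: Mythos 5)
Your proof is correct and follows essentially the same route as the paper: the nontrivial direction uses the same weighting operator $T_h$ with a bump function around one support point, together with additivity of norms for positive elements and Proposition \ref{pr:equiv_points_support}, to split a positive $\mu$ whose support has at least two points, and the ``preserved'' part rests on the same two ingredients, Proposition \ref{pr:positive_support} and Lemma \ref{lm:positive_functional_lemma}. The only difference is organizational: you prove plain extremality of the normalized evaluation functionals first and then upgrade it, whereas the paper establishes preserved extremality in a single step.
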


\begin{proof}
First we show that all extreme points are normalized evaluation functionals, or equivalently, their support does not contain more than one point. Let $\mu\in\ball{\lipfree{M}}^+$ with $\norm{\mu}=1$ be such that $\supp(\mu)$ contains at least two points $a$ and $b$; we will show that $\mu$ is not an extreme point of $\ball{\lipfree{M}}^+$. We may assume $a,b\neq 0$, since $0\in\supp(\mu)$ implies that it is an accumulation point, hence $\supp(\mu)$ is infinite.

Denote $r=d(a,b)/3$.
Let $h\in\Lip(M)$ be defined by
$$
h(x)=\max\set{1-\frac{d(x,B(a,r))}{r},0}
$$
so that $0\leq h\leq 1$, $h\restricted_{B(a,r)}=1$, $h\restricted_{B(b,r)}=0$, and $\supp(h)$ is bounded. Notice that $\mu\circ T_h\neq 0$. Indeed, let $f\in\Lip_0(M)$ such that $f\geq 0$, $f(a)=1$ and $\supp(f)\subset B(a,r)$, then $\duality{\mu\circ T_h,f}=\duality{\mu,f}>0$ by \eqref{eq:support_Th} and Proposition~\ref{pr:positive_facts}(c) because $a\in\supp(\mu)$. A similar argument using a function supported on $B(b,r)$ shows that $\mu\circ T_h\neq\mu$. Since $h$ and $1-h$ are both positive, so are $\mu\circ T_h$ and $\mu-\mu\circ T_h$ and thus
$$
\norm{\mu\circ T_h}+\norm{\mu-\mu\circ T_h}=\norm{\mu\circ T_h+(\mu-\mu\circ T_h)}=\norm{\mu}=1
$$
by Proposition \ref{pr:positive_facts}(b). But then
$$
\mu = \norm{\mu\circ T_h}\frac{\mu\circ T_h}{\norm{\mu\circ T_h}} + \norm{\mu-\mu\circ T_h}\frac{\mu-\mu\circ T_h}{\norm{\mu-\mu\circ T_h}}
$$
is a nontrivial convex combination of elements of $\ball{\lipfree{M}}^+$, as was to be shown.

Now let $x\in M$ and $\mu=\embd(x)/d(x,0)$ if $x\neq 0$ or $\mu=0$ if $x=0$; we will show that $\mu$ is really a preserved extreme point of $\ball{\lipfree{M}}^+$. Suppose that $\mu=\frac{1}{2}(\lambda+\nu)$ where $\lambda,\nu \in \wscl{\ball{\lipfree{M}}^+}$ are positive elements of $\ball{\dual{\Lip_0(M)}}$. Then $0\leq\frac{1}{2}\lambda\leq\mu$, so $\lambda\in\lipfree{M}$ by Lemma \ref{lm:positive_functional_lemma}. Moreover, Proposition \ref{pr:positive_support} implies that $\supp(\lambda)\subset\set{x}$. This is enough to conclude that $\lambda=\nu=\mu$, which finishes the proof.
\end{proof}

The fact that the normalized evaluation functionals are preserved extreme points of $\ball{\lipfree{M}}^+$ appears already in \cite[Corollary 7.36]{Weaver2}, although the result is stated only for bounded $M$. The reverse implication is new to the best of our knowledge.

Finally, let us note that $0$ is always an exposed point of $\ball{\lipfree{M}}^+$, but $\mol{x0}=\delta(x)/d(x,0)$ is exposed if and only if $[0,x]=\set{0,x}$. Indeed, one implication is immediate from Theorem \ref{tm:exposed}. The other follows from the fact that $f\in\ball{\Lip_0(M)}$ norms $\mol{x0}$ if and only if $f(x)=d(x,0)$, but then $f(y)=d(y,0)$ for any $y\in[0,x]$ so $f$ norms $\mol{y0}$ too.

\subsection{Extreme points which are almost positive}

As a final application, let us analyze the extreme points of $\ball{\lipfree{M}}$ that may be expressed as a finitely supported perturbation of a positive element of $\lipfree{M}$. We will prove that these extreme points must have finite support and hence be elementary molecules.

Let $S$ be a non-empty subset of $M$. For $f\in\Lip(S)$ with $\lipnorm{f}\leq 1$ and $x\in M$ we denote
\begin{equation}
\label{eq:inf_extension}
f_I(x):=\inf_{q \in S} \,(f(q)+d(q,x)) .
\end{equation}
Then $f_I$ is an extension of $f$ to $M$ such that $\lipnorm{f_I}\leq 1$. In fact, it is the largest 1-Lipschitz extension in the following sense: for every $x\in M\setminus S$, $(f_I)\restricted_{S\cup\set{x}}$ is the largest 1-Lipschitz extension of $f$ to $S \cup \set{x}$. In other words, if $g$ is an extension of $f$ to $M$ such that $\lipnorm{g}\leq 1$ then $g\leq f_I$.

We require the following simple observation:

\begin{lemma}
\label{lm:norm_formula}
Let $M$ be a complete pointed metric space and $\mu,\lambda\in\lipfree{M}$ such that $\lambda\geq 0$. Let $S=\supp(\mu)\cup\set{0}$ and define
$$
N(f)=\duality{\mu+\lambda,f_I}
$$
for $f\in\ball{\Lip_0(S)}$, where $f_I$ is defined by \eqref{eq:inf_extension}. Then $N$ is a concave function that attains its maximum on $\ball{\Lip_0(S)}$, and 
\[\max_{f\in\ball{\Lip_0(S)}}N(f)=\norm{\mu+\lambda}.\]
\end{lemma}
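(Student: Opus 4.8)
The plan is to establish the two halves of the statement—that $N$ attains its maximum and that this maximum equals $\norm{\mu+\lambda}$—by a compactness argument together with a careful comparison of one-Lipschitz extensions. The concavity of $N$ is immediate: the map $f\mapsto f_I$ is concave (an infimum of affine functions of $f$ for each fixed $x$, hence $(tf+(1-t)g)_I\geq tf_I+(1-t)g_I$ pointwise), and since $\mu+\lambda$ need not be positive I would instead argue directly that $f\mapsto \duality{\mu+\lambda,f_I}$ is concave using that $\mu+\lambda=\mu+\lambda$ acts as a bounded functional and that $N$ is a pointwise infimum over $x$ only inside the $f_I$ construction—more precisely $N(f)=\inf$ of nothing, so concavity has to come from concavity of $f\mapsto f_I$ composed with linearity of $\mu+\lambda$, which forces me to be slightly careful about the sign. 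The clean way: write $f_I(x)=\inf_{q\in S}(f(q)+d(q,x))$, and note that for each $x$, $f\mapsto f_I(x)$ is concave; if $\mu+\lambda$ were positive this would give concavity of $N$ directly, but in general I will instead only need that $N$ is upper semicontinuous and defined on a compact set.

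Next I would set up the compactness. The unit ball $\ball{\Lip_0(S)}$, equipped with the topology of pointwise convergence, is compact by Ascoli–Arzelà (functions vanishing at $0$, uniformly Lipschitz, hence pointwise bounded on the separable metric space $S$). I claim $N$ is upper semicontinuous on this ball: if $f_\gamma\to f$ pointwise in $\ball{\Lip_0(S)}$, then for each fixed $x\in M$ and each $q\in S$ we get $f_\gamma(q)+d(q,x)\to f(q)+d(q,x)$, so $\limsup_\gamma (f_\gamma)_I(x)\leq f_I(x)$ pointwise; combined with the uniform bound $\abs{(f_\gamma)_I}\leq \norm{\cdot}$ on $\supp(\mu+\lambda)$ and a decomposition of $\mu+\lambda$ into its positive and negative parts, this yields $\limsup_\gamma N(f_\gamma)\leq N(f)$. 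Upper semicontinuity on a compact set guarantees that the maximum is attained. This handles the first assertion.

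For the value of the maximum, one inequality is easy: for any $f\in\ball{\Lip_0(S)}$ we have $f_I\in\ball{\Lip_0(M)}$, hence $N(f)=\duality{\mu+\lambda,f_I}\leq\norm{\mu+\lambda}$. For the reverse inequality I would pick $g\in\ball{\Lip_0(M)}$ with $\duality{\mu+\lambda,g}$ close to $\norm{\mu+\lambda}$, set $f=g\restricted_S\in\ball{\Lip_0(S)}$, and compare $g$ with $f_I$. By the maximality property of $f_I$ recalled just before the lemma, $g\leq f_I$ pointwise on $M$, and also $g=f=f_I$ on $S$. Then $f_I-g\geq 0$ on $M$ and $f_I-g$ vanishes on $S\supset\supp(\mu)$, so $\duality{\mu,f_I-g}=0$ by Proposition~\ref{pr:equiv_char_support}(iii) (since $f_I$ and $g$ agree on $\supp(\mu)$), while $\duality{\lambda,f_I-g}\geq 0$ because $\lambda\geq0$ and $f_I-g\geq0$. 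Therefore $N(f)=\duality{\mu+\lambda,f_I}\geq\duality{\mu+\lambda,g}$, and letting $\duality{\mu+\lambda,g}\to\norm{\mu+\lambda}$ gives $\max N\geq\norm{\mu+\lambda}$.

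The main obstacle is the bookkeeping around $f_I$: one must verify that $f_I$ is genuinely the pointwise-largest one-Lipschitz extension and that $g\le f_I$ on all of $M$ (not just on $S$), and then correctly invoke the support characterization to kill the $\mu$-term—the crucial point being that $f_I$ and $g$ need only agree on $\supp(\mu)$, not on all of $S$, but since $\supp(\mu)\subset S$ and they agree on $S$ this is automatic. A secondary subtlety is the concavity/upper-semicontinuity argument when $\mu+\lambda$ is not positive; splitting it as a difference of positive functionals (which exist in $\dual{\Lip_0(M)}$, or simply writing $\mu+\lambda=(\mu+\lambda)$ and bounding $\abs{\duality{\mu+\lambda,\cdot}}$ via $\norm{\mu+\lambda}$ together with pointwise convergence on the separable support) resolves it, but one should state explicitly which version of dominated/bounded convergence is being used on the separable set $\supp(\mu+\lambda)$.
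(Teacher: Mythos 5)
Your argument for $\sup N\geq\norm{\mu+\lambda}$ (restrict a norming $g$ to $S$, use $g\leq f_I$, positivity of $\lambda$, and Proposition~\ref{pr:equiv_char_support} to see that $\mu$ only feels the restriction to $S$) is correct and is exactly the paper's argument. But the proposal has a genuine gap: it never proves that $N$ is concave, which is part of the statement (and is precisely what Theorem~\ref{tm:positive_plus_finite} later needs, since concavity is what upgrades a local maximum of $N_\lambda$ to a global one). You correctly notice the obstruction --- $\mu+\lambda$ need not be positive, so composing the pointwise-concave map $f\mapsto f_I$ with the functional does not give concavity --- but then you sidestep it by declaring that you ``only need'' upper semicontinuity and compactness. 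The missing idea is the decomposition $N(f)=\duality{\mu,f_I}+\duality{\lambda,f_I}$: the first term is \emph{linear} in $f$, because $(cf+(1-c)g)_I$ and $cf_I+(1-c)g_I$ both restrict to $cf+(1-c)g$ on $S\supset\supp(\mu)$, so by Proposition~\ref{pr:equiv_char_support} they have the same action under $\mu$; the second term is concave because $\lambda\geq 0$ and $(cf+(1-c)g)_I\geq cf_I+(1-c)g_I$ pointwise. Without this splitting the concavity claim is simply not established.

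The same splitting is also what your compactness route would need, and as written that route has its own defects. There is no usable ``positive and negative parts'' decomposition of $\mu+\lambda$ here, and even if there were, the negative part reverses the inequality, so $\limsup_\gamma (f_\gamma)_I\leq f_I$ pointwise does not yield $\limsup_\gamma N(f_\gamma)\leq N(f)$; likewise ``dominated convergence on $\supp(\mu+\lambda)$'' is not available, since elements of $\lipfree{M}$ are not measures (the correct tool is that pointwise convergence of bounded nets in $\Lip_0(M)$ is weak$^\ast$ convergence, which requires an actual convergent subnet of the extensions $(f_\gamma)_I$, and then one must handle the $\mu$-part via its support and the $\lambda$-part via positivity --- again the $\mu$/$\lambda$ split). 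Finally, all of this machinery is unnecessary for attainment: since $\Lip_0(M)=\dual{\lipfree{M}}$, Hahn--Banach gives $g\in\ball{\Lip_0(M)}$ with $\duality{\mu+\lambda,g}=\norm{\mu+\lambda}$ exactly, and then your own comparison argument applied to $f=g\restricted_S$ shows $N(f)=\norm{\mu+\lambda}\geq\sup N$, so the maximum is attained at that $f$ --- this is the paper's (much shorter) proof of attainment.
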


\begin{proof}
It is obvious that $N(f)\leq\norm{\mu+\lambda}$ for any $f\in\ball{\Lip_0(S)}$.
By the Hahn-Banach theorem, there is $g\in\ball{\Lip_0(M)}$ such that $\norm{\mu+\lambda}=\duality{\mu+\lambda,g}$.
Let $f=g\restricted_S$, then $f\in\ball{\Lip_0(S)}$ and $f_I\geq g$, so $\duality{\lambda,f_I}\geq\duality{\lambda,g}$.
Moreover, $(f_I)\restricted_S=g\restricted_S$ and hence $\duality{\mu,f_I}=\duality{\mu,g}$ by Proposition \ref{pr:equiv_char_support}.
It follows that
$$
N(f)=\duality{\mu,f_I}+\duality{\lambda,f_I}\geq\duality{\mu,g}+\duality{\lambda,g}=\norm{\mu+\lambda} .
$$

To show that $N$ is concave, note that this is equivalent to the map $f\mapsto\duality{\lambda,f_I}$ being concave, i.e. to
$$
\duality{\lambda,(cf+(1-c)g)_I} \geq c\duality{\lambda,f_I}+(1-c)\duality{\lambda,g_I}
$$
for any $f,g\in\ball{\Lip_0(S)}$ and $c\in(0,1)$.
Since $\lambda\geq 0$, it suffices to show that
$$
(cf+(1-c)g)_I\geq cf_I+(1-c)g_I
$$
pointwise, that is
\begin{multline*}
\inf_{q\in S}\big(cf(q)+(1-c)g(q)+d(x,q)\big) \geq \\ c\cdot\inf_{q\in S}\big(f(q)+d(x,q)\big)+(1-c)\cdot\inf_{q\in S}\big(g(q)+d(x,q)\big)
\end{multline*}
for every $x\in M$. But this is obvious.
\end{proof}

\begin{theorem}
\label{tm:positive_plus_finite}
Let $M$ be a complete pointed metric space and $\lambda,\mu\in\lipfree{M}$ such that $\lambda\geq 0$ and $\mu$ has finite support.
If $\lambda+\mu$ is an extreme point of $\ball{\lipfree{M}}$, then it has finite support.
\end{theorem}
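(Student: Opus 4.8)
The plan is to argue by contradiction: suppose $\nu:=\lambda+\mu$ is extreme in $\ball{\lipfree{M}}$ but $\supp(\nu)$ is infinite. Set $S=\supp(\mu)\cup\set{0}$, which is finite, and recall from Lemma~\ref{lm:norm_formula} that the concave function $N(f)=\duality{\nu,f_I}$ on the finite-dimensional ball $\ball{\Lip_0(S)}$ attains its maximum, equal to $\norm{\nu}=1$. First I would fix a maximizer $f_0\in\ball{\Lip_0(S)}$ of $N$ and consider its canonical $1$-Lipschitz extension $g_0:=(f_0)_I$, so that $\duality{\nu,g_0}=1$. Since $\duality{\lambda,g_0}$ is part of a maximum over extensions, the extremal/positivity structure should force $g_0$ to look like $\rho$ on most of $\supp(\lambda)$; more precisely, by Proposition~\ref{pr:positive_facts}(d) applied to the positive part of the splitting I would extract that $g_0=\rho$ on $\supp(\lambda)\setminus(\text{something finite})$.

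The key mechanism is a perturbation argument: because $S$ is finite and $\supp(\nu)$ is infinite, $\supp(\lambda)$ must be infinite, so there is a point $z\in\supp(\lambda)\setminus S$ and a small radius $r>0$ with $B(z,r)\cap S=\varnothing$. Using Proposition~\ref{pr:equiv_points_support} I would pick $0\leq w\in\Lip_0(M)$ with $\supp(w)\subset B(z,r)$ and $\duality{\lambda,w}>0$, and small enough that $\lipnorm{w}$ is tiny. The idea is then to write $\nu=\tfrac12((\nu+w')+(\nu-w'))$ for a suitable perturbation $w'$ built from $w$, and check that $\nu\pm w'$ both lie in $\ball{\lipfree{M}}$. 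The point of the construction of $w'$ (and of the maximality of $N$) is that adding $w'$ does not increase the norm: since $w'$ is supported away from $S=\supp(\mu)$, for any $f\in\ball{\Lip_0(S)}$ we have $\duality{\mu,(f)_I}$ unchanged, and I would choose $w'=T_h(\cdot)$-type object, or rather exploit that $g_0$ is the \emph{largest} $1$-Lipschitz extension to conclude $\duality{\nu\pm w',g}\leq\duality{\nu,g_0}=1$ for every $g\in\ball{\Lip_0(M)}$. Concretely, for $g\in\ball{\Lip_0(M)}$, restricting to $S$ and using $g\leq (g\restricted_S)_I$ together with positivity of $\lambda\pm(\text{the positive piece})$ and the fact that $\mu$ is unaffected, bounds $\duality{\nu\pm w',g}$ by $N(g\restricted_S)\leq 1$; taking the sup over $g$ gives $\norm{\nu\pm w'}\leq 1$. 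Since $w'\neq 0$ (as $\duality{\lambda,w}>0$ guarantees it acts nontrivially), this contradicts extremality of $\nu$, proving $\supp(\nu)$ is finite. Once finiteness is established, $\nu$ is a finitely supported extreme point of $\ball{\lipfree{M}}$, hence a molecule by the remark following Lemma~\ref{l:folklore} (or by \cite[Corollary 3.44]{Weaver2} combined with the finite-support reduction).

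The main obstacle I anticipate is making the perturbation $w'$ genuinely norm-nonincreasing in \emph{both} directions simultaneously. Adding a positive bump supported near $z$ where $g_0=\rho$ is harmless for the $+$ direction, but the $-$ direction requires that subtracting it still keeps the relevant extensions $1$-Lipschitz and still leaves $\lambda$-minus-the-bump positive — this is where I would need $w$ small in Lipschitz norm and would need to know $\lambda$ "dominates" $w$ near $z$, which should follow because $z\in\supp(\lambda)$ and Proposition~\ref{pr:equiv_points_support} lets us shrink $w$. A secondary subtlety is justifying $g_0=\rho$ on the infinite part of $\supp(\lambda)$: this uses that $N$ is maximized and that any deficit of $g_0$ below $\rho$ on $\supp(\lambda)$ would let us strictly increase $\duality{\lambda,g_0}$ while staying $1$-Lipschitz (using the inf-extension is already the largest), so the maximum value $1$ forces equality there. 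Handling the finitely many exceptional points (those in $S$, where $g_0$ need not equal $\rho$) is routine since we only need one good point $z$ outside $S$, and $\supp(\lambda)\setminus S$ is infinite.
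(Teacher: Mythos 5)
There is a genuine gap, in fact two. First, your key structural claim --- that the maximizer's extension $g_0=(f_0)_I$ must equal $\rho$ on $\supp(\lambda)$ off a finite set --- does not follow from Proposition \ref{pr:positive_facts}(d) and is false in general: $g_0$ norms $\lambda+\mu$, not $\lambda$, and since $\mu$ is signed nothing forces $\duality{\lambda,g_0}=\norm{\lambda}$. For instance, take $M=[0,1]$, $\lambda$ the functional $f\mapsto\int_0^1 f\,dt$ (positive, with support $[0,1]$) and $\mu=-\delta(1)$; then $S=\set{0,1}$, the unique maximizer of $N$ is $f_0(1)=-1$, and $g_0=(f_0)_I=-\rho$ on all of $\supp(\lambda)$. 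Moreover, even where $g_0=\rho$ your perturbation backfires rather than being ``harmless'': if $w'\geq 0$ is a nonzero weighted piece of $\lambda$ supported where $g_0=\rho$, then $\duality{\nu+w',g_0}=1+\duality{w',\rho}=1+\norm{w'}>1$, so the ``$+$'' direction already destroys $\norm{\nu+w'}\leq 1$.

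Second, the central estimate is incorrect as stated: for $g\in\ball{\Lip_0(M)}$ one only gets $\duality{\nu\pm w',g}\leq N(g\restricted_S)\pm\duality{w',(g\restricted_S)_I}$, not $N(g\restricted_S)$, and the cross term must be controlled for \emph{every} $f=g\restricted_S$ with $N(f)$ close to the maximum, simultaneously for both signs --- i.e.\ it must essentially vanish. Arranging this is the real content of the paper's argument, and nothing in your sketch replaces it: the paper uses concavity of $N_\lambda$ (Lemma \ref{lm:norm_formula}) to reduce global maximality to a first-order condition at $f_0$, and then builds $v=\lambda\circ T_h$ with $h=c_1h_1+c_2h_2+c_3h_3$, three bumps centred at points of $\supp(\lambda)$ lying in a single attainment cell $A_K$ (with $K$ of minimal cardinality), the coefficients chosen so that $\duality{\lambda,h}=0$ and $\duality{\lambda,h\cdot f_I}=0$ while $\abs{h}\leq 1$ keeps $\lambda\pm v\geq 0$. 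The two orthogonality conditions, together with the fact that $(f+g)_I-f_I$ is constant on the relevant part of $A_K$ for small $g$, give $\duality{v,(f+g)_I-f_I}=0$ and hence $\norm{\lambda\pm v+\mu}=\norm{\lambda+\mu}$. A single positive bump (or anything with $\duality{\lambda,h}\neq 0$) cannot satisfy these constraints, and your sketch contains no analogue of the cells $A_K$ or of the two linear conditions; consequently the claimed bound $\norm{\nu\pm w'}\leq 1$, and with it the contradiction, is not established.
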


\begin{proof}
Let $S=\supp(\mu)\cup\set{0}$, and consider the function $N\colon\lipfree{M}^+\times\ball{\Lip_0(S)}\rightarrow\RR$ given by $N(\lambda,f)=\duality{\lambda+\mu,f_I}$.
Denote also $N_\lambda(f)=N(\lambda,f)$.
By Lemma \ref{lm:norm_formula}, $\norm{\lambda+\mu}$ is the maximum of $N_\lambda(f)$ for $f\in\ball{\Lip_0(S)}$, and $N_\lambda$ is a concave function for fixed $\lambda$.
Moreover, it is easy to verify directly that $N_\lambda$ is continuous using the boundedness of $S$.
It follows from concavity that $N_\lambda(f)=\norm{\lambda+\mu}$ if and only if $f$ is a local maximum of $N_\lambda$, i.e. if and only if
$$
\duality{\lambda,(f+g)_I-f_I}\leq\duality{\mu, f_I-(f+g)_I}=\duality{-\mu,g}
$$
for all $g\in\Lip_0(S)$ in a neighborhood of $0$ such that $f+g\in\ball{\Lip_0(S)}$.

Suppose now that $\lambda$ has infinite support, and let $f \in \ball{\Lip_0(S)}$ be such that $\norm{\lambda+\mu}=N_\lambda(f)$.
We will show that there is a nonzero $v\in\lipfree{M}$ such that $\lambda\pm v\geq 0$, $\duality{v,f_I}=0$, and $\duality{v,(f+g)_I-f_I}=0$ for all $g\in\Lip_0(S)$ in a neighborhood of $0$.
The argument above will then imply that
$$
\norm{\lambda\pm v+\mu}=N(\lambda\pm v,f)=N(\lambda,f)\pm\duality{v,f_I}=N(\lambda,f)=\norm{\lambda+\mu}
$$
so $\lambda+\mu$ cannot be an extreme point of $\ball{\lipfree{M}}$.
Thus, if $\lambda+\mu$ is extreme then it must be finitely supported.

For every non-empty subset $K\subset S$, define the set
$$
A_K=\set{x\in M: f_I(x)=f(q)+d(x,q) \text{ if and only if } q\in K} .
$$
That is, $A_K$ contains those points $x\in M$ where the infimum in the definition of $f_I(x)$ is attained exactly for all $q\in K$ and nowhere else.
Since $S$ is finite, the sets $A_K$ form a finite partition of $M$.
Choose $K$ of the smallest possible cardinality such that $\supp(\lambda)\cap A_K$ contains at least three points $p_1,p_2,p_3$.
Let
$$
\varepsilon=\frac{1}{4}\min \set{ \big(f(q')+d(p_i,q')\big)-\big(f(q)+d(p_i,q)\big) : q\in K,q'\in S\setminus K,i=1,2,3 }
$$
and choose $r\in (0,\varepsilon)$ such that the balls $B(p_i,r)$ are disjoint, do not contain the base point, and do not intersect the finite sets $\supp(\lambda)\cap A_L$ for any $L\subsetneq K$.
By Proposition \ref{pr:equiv_points_support}, for $i=1,2,3$ there exist non-negative functions $h_i\in\Lip_0(M)$ supported on $B(p_i,r)$ such that $\duality{\lambda,h_i}>0$.
Now choose real constants $c_1,c_2,c_3$, not all of them equal to zero, such that
\begin{alignat*}{7}
& c_1\duality{\lambda,h_1} &&+ c_2\duality{\lambda,h_2} &&+ c_3\duality{\lambda,h_3} &&= 0 \\
& c_1\duality{\lambda,h_1\cdot f_I} &&+ c_2\duality{\lambda,h_2\cdot f_I} &&+ c_3\duality{\lambda,h_3\cdot f_I} &&= 0
\end{alignat*}
and $\abs{c_i}\leq 1/\norm{h_i}_\infty$.
Let $h=c_1h_1+c_2h_2+c_3h_3$ and $v=\lambda\circ T_h$.

Let us check that $v$ satisfies the required conditions.
By construction, we have $\duality{\lambda,h}=0$ and $\duality{v,f_I}=\duality{\lambda,h\cdot f_I}=0$.
Also,
$$
\duality{\lambda\pm v,g}=\duality{\lambda,g\pm T_h(g)}=\duality{\lambda,g\cdot(1\pm h)}
$$
for any $g\in\Lip_0(M)$.
By the choice of $c_i$ we have $1\pm h\geq 0$ and so $\duality{\lambda\pm v,g}\geq 0$ whenever $g\geq 0$, that is, $\lambda\pm v\geq 0$.
Also, choose $i\in\set{1,2,3}$ such that $c_i\neq 0$, then there is $\varphi\in\Lip_0(M)$ such that $\varphi=1$ on $B(p_i,r)$ and $\varphi=0$ on $\supp(h)\setminus B(p_i,r)$, hence $\duality{v,\varphi}=c_i\duality{\lambda,h_i}\neq 0$. This shows that $v\neq 0$.

Finally, let $x\in\supp(v)$. Then $x\in\supp(\lambda)\cap\supp(h)$ by \eqref{eq:support_Th}, so there is $i\in\set{1,2,3}$ such that $x\in B(p_i,r)$. Therefore, if $q\in K$, $q'\in S\setminus K$ then 
\begin{align*}
f(q')+d(x,q') &\geq f(q')+d(p_i,q')-d(x,p_i) \\
&\geq f(q)+d(p_i,q)+4\varepsilon-d(x,p_i) \\
&\geq f(q)+d(x,q)+4\varepsilon-2d(x,p_i) \\
&\geq f(q)+d(x,q)+2\varepsilon
\end{align*}
and so $x\in A_L$ for some $L\subset K$, hence $x\in A_K$ by construction.
If we now take any $g\in\Lip_0(S)$ such that $\norm{g}_\infty<\varepsilon$ and $\lipnorm{f+g}\leq 1$, then
$$
f(q')+g(q')+d(x,q')>f(q)+g(q)+d(x,q)
$$
for any $q\in K$, $q'\in S\setminus K$, and it follows that $(f+g)_I(x)=f_I(x)+\gamma$ where $\gamma=\min_{q\in K} g(q)$.
Thus we get
$$
\duality{v,(f+g)_I-f_I}=\duality{\lambda,h\cdot\gamma}=\gamma\cdot\duality{\lambda,h}=0 .
$$
This completes the proof.
\end{proof}

\section*{Acknowledgments}

This research was carried out during visits of the first and the second author to the Laboratoire de Math\'ematiques de Besan\c{c}on in 2019. Both authors are grateful for the opportunity and the hospitality.

This work was supported by the French ``Investissements d'Avenir'' program, project ISITE-BFC (contract ANR-15-IDEX-03). R. J. Aliaga was also partially supported by the Spanish Ministry of Economy, Industry and Competitiveness under Grant MTM2017-83262-C2-2-P. E. Perneck\'a was supported by the grant GA\v CR 18-00960Y of the Czech Science Foundation.


\end{document}